\theoremstyle{definition}
\newtheorem{theorem}{Theorem}
\newtheorem{lemma}[theorem]{Lemma}
\newtheorem{proposition}[theorem]{Proposition}
\newtheorem{corollary}[theorem]{Corollary}
\begin{document}
\title{\bf Diffuse scattering on graphs}
\author[umich_math]{Anna C. Gilbert}
\ead{annacg@umich.edu}
\author[umich_math]{Jeremy G. Hoskins \corref{cor1}}
\ead{jhoskin@umich.edu}
\author[umich_math,umich_phys]{John C. Schotland}
\ead{jcsch@umich.edu}

\address[umich_math]{Department of Mathematics, University of Michigan, Ann Arbor, MI 48109\\}
\address[umich_phys]{Department of Mathematics and Department of Physics, University of Michigan, Ann Arbor, MI 48109\\} 

\cortext[cor1]{Corresponding author}

\begin{abstract}
We formulate and analyze difference equations on graphs analogous to time-independent diffusion equations arising in the study of diffuse scattering in continuous media.  Moreover, we show how to construct solutions in the presence of weak scatterers from the solution to the homogeneous (background problem) using Born series, providing necessary conditions for convergence and demonstrating the process through numerous examples. In addition, we outline a method for finding Green's functions for Cayley graphs for both abelian and non-abelian groups. Finally, we conclude with a discussion of the effects of sparsity on our method and results, outlining the simplifications that can be made provided that the scatterers are weak and well-separated.

\end{abstract}
\begin{keyword}
 graph algorithms \sep graphs and groups \sep graphs and matrices \sep Discrete mathematics in relation to computer science \sep Equations of mathematical physics and other areas of application\newline  {\it AMS:} 05C85, 05C25, 05C50, 68R, 35Q
\end{keyword}

\maketitle

\section{Introduction}
Spectral graph theory is a rich and well-developed theory for both the combinatorial and analytic properties of graphs. The following set-up is generally considered. Let $G=(V,E)$ be a graph with vertex set $V$ and edge set $E,$ and $L$ be the combinatorial Laplacian $L,$ or some suitably rescaled variant \cite{specgraphfan}. We can then formulate a graph analog of Poisson's equation
 \begin{equation}\label{eq_int_pois}
\left\{
     \begin{array}{lr}
(Lu)(x)  = f(x), \quad &x \in V\\
u(x) = g(x), \quad &x \in \delta V
\end{array}
\right.
\end{equation}
where $\delta V$ is the set of boundary vertices, which will be discussed in more detail later, and the functions $f$ and $g$ represent internal and boundary sources, respectively. Equation (\ref{eq_int_pois}) has been studied both when the edges are equally-weighted and when the edge weighting varies throughout the graph \cite{ convanal,specgraphfan, discgreen}. In this work, we consider the effect of introducing inhomogeneities on the vertices rather than on the edges, as represented by the addition of a (vertex) potential term to equation (\ref{eq_int_pois}). We call this problem {\it diffuse scattering on graphs} because of its analogy to a related problem in the continuous setting, where the vertex potential is often called the {\it absorption}. A similar problem arises in the study of Schr\"{o}dinger operators on graphs, see for example \cite{vert,eig_path,disc_op,polyom,pert,soardi}. In order to develop the necessary foundations to formulate corresponding inverse problems, which will be analyzed in subsequent works, we also study the role of boundary conditions on the solutions. In particular, we consider Dirichlet, Neumann and Robin, or mixed, boundary conditions, which are often employed in the continuous setting. 

The graph analog of Poisson's equation is related to the classical problem of resistor networks first studied by Kirchhoff in 1847 \cite{kirch}. In that setting, one is given a collection of interconnected resistors to which a voltage source is attached at various points \cite{morrow}. The resulting system can be thought of as a weighted graph, with each edge corresponding to a particular resistor and the vertices representing the connections between them \cite{morrow}. In the event that all the resistors are identical, the voltage at each point satisfies Poisson's equation on the associated graph \cite{dirac_poisson}. In this setting, one seeks either to map the network, finding its corresponding graph \cite{morrow}, solely by measuring the current or potential at various points in the network. This physical analogy is also employed for graph sparsification \cite{Spielman:2008}, as well as in near linear-time solvers for symmetric, diagonally dominant linear systems \cite{DrineasMahoney:2010,KoutisMillerPeng:2011,vishnoi2012laplacian}.

Discrete analogs of PDEs on graphs are not limited to Poisson-type problems and are used extensively in lattice dynamics where we consider the graph analog of the Helmholtz equation \cite{martin, PhysRevLatt}, which arises when considering the Fourier transform of the wave equation. In lattice theory, one problem of particular importance is to examine the propagation of phonons through a crystal in order to determine the size and location of imperfections \cite{martin, PhysRevLatt}.  

In this paper we consider a graph analog of a different PDE, which we call the problem of diffuse scattering on graphs, though the equation also arises in the study of discrete Schr\"{o}dinger operators \cite{disc_op} . A key component of our analysis will be constructing methods for obtaining the appropriate Green's functions for the problems we wish to consider. The idea of discrete Green's functions has, implicitly or explicitly, a long history arising in many important problems and fields such as the study of inverses of tri-diagonal matrices \cite{gantmach}, potential theory \cite{beurl,deny,duffin}, the study of Schr\"{o}dinger operators on graphs \cite{part_dat,eig_path,disc_op,polyom,pert,cartier,soardi,inf_yamasaki}, and the graph-theoretic analog of Poisson's equation \cite{discgreen,inf_kayano, inf_urak2,inf_urak1}. Additionally, Green's function methods have yielded interesting results in many areas including the properties of random walks \cite{discgreen, span_tree}, chip-firing games \cite{RB_Ellis_Thesis}, analysis of online communities \cite{blog}, machine learning algorithms \cite{com_tim,mach_lrn} and load balancing in networks \cite{Chau_Load_Balance}.

As in the analogous continuous problem, we are particularly interested in systems with nearly uniform absorption. By this we mean that the variations in the absorption are small relative to the mean and are typically limited to a small subset of vertices. By defining and applying a discrete version of the Born series we obtain, under suitable conditions, a series solution to the forward problem for a heterogeneous medium, given in terms of the Green's function for the diffusion equation on the same graph but with uniform absorption, called the {\it background Green's function}. We then provide sufficient conditions on the inhomogeneities for the series solution to converge to the correct solution and provide estimates for the rate of convergence.

Although there are many similarities between the equations considered here and those previously mentioned, changing the underlying differential operator gives rise to significant differences in the qualitative behaviour of the solutions. In Section \ref{sec_prelim} we illustrate these differences through various examples and connect the results with their continuous counterparts when such analogues exist. We also consider the important special case of graphs with boundaries, since in applications measurements are carried out on the boundary. In cataloguing the possible boundary conditions, we discuss the well-known Dirichlet and Neumann boundary conditions before formulating a graph equivalent of Robin boundary conditions, similar to those considered in \cite{journ_func}. The introduction of the added parameter representing the mixture of Dirichlet and Neumann boundary conditions will be useful in subsequent work when we consider the inverse problem.

In Section \ref{sec_born} we develop the necessary tools to construct the Born series from the background Green's function. In particular, we prove necessary conditions for the convergence of the series, and discuss the dependence of the rate of convergence on the structure of the graph.

Before applying the Born series to a specific graph, it is first necessary to obtain the background Green's function. In Section \ref{sec_examp} we provide examples of various families of graphs for which the background Green's function is explicitly known and in Section \ref{sec_rep_theory} we discuss the connection between the symmetries of vertex-transitive graphs and group representation theory, showing how to use knowledge of the symmetry group of a graph to obtain an expression for the corresponding background Green's function.

In Section \ref{sec_num_exp} we present a few representative numerical experiments demonstrating the convergence of the Born series for small perturbations to the absorption and compare the empirical convergence results to the bounds obtained in Section \ref{sec_born}. Finally, in Section \ref{sec_pt_abs}, we consider the discrete analogue of a classical problem in scattering theory; the scattering due to a small collection of point absorbers. In the case where there are only one or two point absorbers present, we explicitly sum the Born series and give exact formulae for the scattered fields provided the Green's function for the homogeneous medium is known. We conclude with a comparison of the scattering of light from point absorbers on infinite one-dimensional and two-dimensional lattice graphs to the well-known formulae for the continuous problem of the same dimensions.
 

\section{Preliminaries}\label{sec_prelim}

\subsection{Time-independent diffusion equations on graphs}
Let $\Gamma= (V',E)$ be a connected locally finite loop-free graph with edge set $E$ and vertex set $V'.$ Given a subset, $V,$ of the vertex set $V',$ we define the {\it vertex boundary} of $V,$ $\delta V,$ by \cite{specgraphfan}
\begin{equation}\label{eq:delta_def}
 \delta V = \{y \in V'\setminus V \,\,|\,\,\exists \,x \in V \,\,{\rm such}\,\,{\rm that}\,\, x\sim y \in E\},
 \end{equation}  
where  $x \sim y$ if $x$ is adjacent to the vertex $y,$ i.e. there is an edge in $E$ joining the vertex $x$ to $y.$ Here we assume that $V$ is a proper subset of $V'$ so that $\delta V$ is not empty. As in \cite{specgraphfan}, if $d_x$ is the degree of the vertex $x,$ we consider the (vertex) Laplacian $L: (V \cup \delta V) \times (V\cup\delta V) \rightarrow \mathbb{R}$ defined by
 \begin{equation}
   L(x,y) = \left\{
     \begin{array}{lr}
       d_x &  {\rm if}\,\, y = x\\
       -1 & {\rm if} \,\, y \sim x\\
       0 & {\rm otherwise}.
     \end{array}
   \right.
\end{equation} 
Note that in the following, by a slight abuse of notation, we will use the same symbol, $L,$ to denote the Laplacian operator, its kernel, and the corresponding matrix. In later sections we will employ a similar convention when discussing operators for the time-independent diffusion equation and their associated Green's functions.

To develop the time-independent diffusion equation on graphs we require suitable boundary conditions analogous to those arising in partial differential equations (PDEs). We say a function $u:V' \rightarrow \mathbb{R}$ satisfies a homogeneous {\it Dirichlet} boundary condition if its restriction to $\delta V$ is identically zero \cite{specgraphfan}. To obtain appropriate derivative-type boundary conditions we define the discrete analog of the normal derivative $\partial: \ell^2(V \cup \delta V) \rightarrow \ell^2(\delta V)$ by
\begin{equation}
\partial u(y) = \sum_{\substack{x \in V\\x \sim y}}[u(y)-u(x)].
\end{equation}
A function $u: (V \cup \delta V) \rightarrow \mathbb{R}$ satisfies a homogeneous {\it Neumann} boundary condition  \cite{specgraphfan} if $\partial u(x) = 0$ for all $x \in \delta V$ and satisfies a {\it Robin} boundary condition \cite{journ_func} if there exists a constant $t\ge 0$ such that
\begin{equation}\label{eq:rob_hom}
t\, u(x) + \partial u (x) = 0
\end{equation}
 for all $x \in \delta V.$ Note that choosing $t=0$ yields Neumann boundary conditions while letting $t \rightarrow \infty$ produces Dirichlet boundary conditions. Given a function $g: \delta V \rightarrow \mathbb{R}$ we can also define corresponding inhomogeneous boundary conditions
\begin{equation}\label{eq:rob}
t\, u(x) +\, \partial u (x) = g(x),\quad x \in \delta V
\end{equation}
which arise when sources or sinks are located on the boundary. For a given interior source $f$ and boundary source $g$ we define the constant absorption diffusion equation
\begin{equation}\label{eq:unpet_diff}
\left\{
     \begin{array}{lr}
\sum_{y \in V'} L(x,y) \,u(y) +\alpha_0 u(x) = f(x), \quad x \in V\\
t\, u(x) +  \partial u(x) = g(x), \quad x \in \delta V.
\end{array}
\right.
\end{equation}
 Here, in analogy with the physical problem of diffuse scattering, $\alpha_0$ is a strictly positive constant which represents the absorption of the medium. Note that $L$ is positive semidefinite \cite{journ_func,specgraphfan}. 

\subsection{Linear systems for finite boundary value problems}
In the case where $|V|$ and $|\delta V|$ are both finite the boundary value problem (\ref{eq:unpet_diff}) can be written as a linear system of equations for $u.$ We first index the vertices of $V$ by $1,\ldots, n = |V|$ and those of $\delta V$ by $n+1,\ldots, n+k$ where $k = |\delta V|.$ Next we construct the $(n+k) \times (n+k)$ matrix
\begin{equation}\label{eq:h_0_def}
H_0 =L+ \left(\begin{array}{cc} \alpha_0 {I}_{n\times n} &0_{n\times k} \\ {0}_{k\times n} & t I_{k\times k} \end{array}\right)
\end{equation}
where $I_{n\times n}$ and $I_{k\times k}$ are the $n\times n$ and $k\times k$ identity matrices, respectively,  and $0_{n\times k}$ is the $n \times k$ zero matrix. If we let $u = (u(x_1),\ldots,u(n_{n+k}))^*$ and $$\tilde{f} = (f(x_1),\ldots,f(x_n),g(x_{n+1}),\ldots, g(x_{n+k}))^*,$$ where $w^*$ denotes the conjugate transpose of $w,$ then we can rewrite the diffusion equation (\ref{eq:unpet_diff}) as
\begin{equation}\label{eq:unpet_mat_frm}
H_0\, {u} = \tilde{f}.
\end{equation}

Similarly, we obtain Dirichlet boundary conditions by replacing the matrix operator $H_0$ in (\ref{eq:h_0_def}) by the matrix
\begin{equation}
H_{0}^{{\rm D}} = \left(\begin{array}{cc} L(V;V)+\alpha_0 I_{n\times n} & L(V;\delta V)  \\ 0_{k \times n} & I_{k\times k} \end{array}\right)
\end{equation}
Here we have used the convention that given any two sets $A,B \subset V\cup\delta V,$ $L(A;B)$ is the submatrix of the Laplacian matrix, $L,$ obtained by taking the rows corresponding to the elements in $A$ and the columns corresponding to the elements in $B.$ We say the vector $u$ satisfies the diffusion equation with Dirichlet boundary conditions if
\begin{equation}\label{eq:dir_prob}
H_{0}^{{\rm D}} \, u  = \tilde{f}.
\end{equation}
Alternatively, one can obtain $u$ by noting that
\begin{equation}\label{eq:lim_dira}
u = \lim_{t \rightarrow \infty} u_t
\end{equation}
where $u_t$ satisfies the equation
\begin{equation}\label{eq:lim_dir}
H_0 u_t = \left(\begin{array}{l} f\\ t\, g \end{array}\right)
\end{equation}
and $H_0$ is the matrix operator corresponding to Robin boundary conditions depending on the parameter $t$ as in (\ref{eq:rob}).


It is clear by construction that $H_0$ is symmetric. As shown in the following proposition, under certain restrictions, the matrix $H_0$ is also positive definite and hence has a well-defined inverse. This is equivalent to the existence of a unique solution to the diffusion equation (\ref{eq:unpet_diff}). 

\begin{proposition}
\label{h_0_prop}
For all $t$ such that $0 \le t<\infty$ the smallest eigenvalue $\lambda_{\rm m}$ of $H_0$ satisfies
\begin{equation}
\lambda_{\rm m} \ge {\rm min}\{t, \alpha_0\}.
\end{equation}
It follows that the matrix $H_0$ is positive definite if $t >0$ then $H_0$ and positive semidefinite if $t=0,$ though later we will show that in the latter case $H_0$ is also positive definite provided that $\alpha_0 >0,$ see also \cite{ vert}.

\end{proposition}
\begin{proof}
The desired inequality follows immediately from an analysis of the variational formulation of the problem, as in \cite{journ_func,vert}. Alternatively, the result can also be shown by applying the Gerschgorin circle theorem to the operator $H_0.$

\end{proof}

\begin{proposition}
\label{h_0_prop_2}
Consider the diffusion equation (\ref{eq:unpet_diff}) on a connected graph $\Sigma$ with Neumann boundary conditions corresponding to $t= 0.$ The associated matrix operator $H_0$ is positive definite for all $\alpha_0>0$ and moreover 
\begin{equation}
\lambda_{\rm m} =\frac{|V|}{|V|+|\delta V|} \alpha_0+O(\alpha_0^2)
\end{equation}
as $\alpha_0 \rightarrow 0^+.$
\end{proposition}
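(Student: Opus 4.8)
The plan is to write the operator $H_0$ (with $t=0$) as a perturbation of a graph Laplacian and then invoke first-order perturbation theory. Set $N = |V| + |\delta V| = n + k$, let $\Pi$ be the $N \times N$ diagonal matrix equal to the identity on the interior indices $1,\dots,n$ and zero on the boundary indices, and put $M := H_0\big|_{\alpha_0 = 0}$, so that $H_0 = M + \alpha_0\,\Pi$. I would then identify $M$ as the combinatorial Laplacian of a connected graph. By construction $M$ is symmetric, its off-diagonal entries lie in $\{0,-1\}$, and computing row sums exactly as in the proof of Proposition~\ref{h_0_prop} (interior rows sum to $\alpha_0 = 0$, boundary rows sum to $t = 0$) shows that every row of $M$ sums to zero. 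A symmetric matrix with nonpositive off-diagonal entries and vanishing row sums is precisely the Laplacian of the weighted graph $\tilde G$ on $V \cup \delta V$ with edge weights $w_{ij} = -M_{ij} \ge 0$; concretely $\tilde G$ is $\Sigma$ together with each boundary vertex joined to its interior neighbours (no boundary--boundary edges appear, since $D$ is diagonal and $R$ records only interior neighbours). Because $\Sigma$ is connected and, by the definition of $\delta V$, every $y \in \delta V$ is adjacent to some interior vertex, $\tilde G$ is connected, so $M \succeq 0$ with $\operatorname{ker} M = \operatorname{span}\{\mathbf 1\}$ and the eigenvalue $0$ simple.

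Positive definiteness for $\alpha_0 > 0$ is then immediate: for any $v \in \mathbb{R}^N$,
\[
v^{*} H_0\, v = v^{*} M v + \alpha_0\,\lVert \Pi v\rVert^2 \ge 0,
\]
and vanishing forces both $v^{*} M v = 0$ and $\Pi v = 0$. The first gives $v = c\,\mathbf 1$, and the second then yields $c\,n = 0$, so $v = 0$ since $n = |V| \ge 1$. (Note Proposition~\ref{h_0_prop} only gives $\lambda_{\rm m} \ge \min\{0,\alpha_0\} = 0$ when $t=0$, so this refinement is genuinely needed.)

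For the asymptotics I would treat $H_0(\alpha_0) = M + \alpha_0\,\Pi$ as a linear, hence analytic, one-parameter family of real symmetric matrices. Since the eigenvalue $0$ of $M$ is simple, the eigenvalue branch $\lambda_{\rm m}(\alpha_0)$ issuing from it is analytic near $\alpha_0 = 0$, with leading term the Rayleigh quotient of $\Pi$ in the normalized ground state $\hat{\mathbf 1} = \mathbf 1/\sqrt{N}$:
\[
\lambda_{\rm m}(\alpha_0) = \alpha_0\,\langle \hat{\mathbf 1},\, \Pi\,\hat{\mathbf 1}\rangle + O(\alpha_0^2) = \frac{n}{N}\,\alpha_0 + O(\alpha_0^2) = \frac{|V|}{|V| + |\delta V|}\,\alpha_0 + O(\alpha_0^2).
\]
To confirm that this branch really is the smallest eigenvalue for small $\alpha_0$, note that the remaining eigenvalues of $M$ are bounded below by the next-smallest eigenvalue $a > 0$ of the connected Laplacian, and by Weyl's inequality they stay within $\alpha_0$ of those values, hence exceed $\tfrac{|V|}{N}\alpha_0$ once $\alpha_0$ is sufficiently small.

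The only non-routine step is establishing that the zero eigenvalue of $M$ is simple, equivalently that $\tilde G$ is connected; once non-degeneracy is in hand, the positive definiteness and the first-order expansion are standard. In particular one must verify the combinatorial point that discarding the boundary--boundary entries (which is exactly what the definitions of $\partial$ and of $D$ accomplish) does not disconnect $\tilde G$, and this is precisely where connectedness of $\Sigma$ together with the defining property of $\delta V$ enters.
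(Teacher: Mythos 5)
Your proposal is correct and takes essentially the same route as the paper: the identical decomposition $H_0 = A + \alpha_0 B$ (your $M + \alpha_0 \Pi$), simplicity of the zero eigenvalue of the unperturbed part with eigenvector $\mathbf{1}$, and first-order perturbation theory giving $\lambda_{\rm m} = \alpha_0\,\mathbf{1}^* B\, \mathbf{1}/\mathbf{1}^*\mathbf{1} + O(\alpha_0^2) = \frac{|V|}{|V|+|\delta V|}\alpha_0 + O(\alpha_0^2)$. Your only departures are refinements of detail: you verify kernel simplicity directly by exhibiting $M$ as the Laplacian of the connected auxiliary graph $\tilde{G}$ (where the paper cites Chung's book, even though $A$ is the Laplacian of $\Sigma$ with boundary attachments rather than of $\Sigma$ itself), you derive positive definiteness from that kernel simplicity rather than from the strictly positive diagonal of $D$ as the paper does, and you add the Weyl-inequality check that the analytic branch issuing from $0$ remains the smallest eigenvalue for small $\alpha_0$, a point the paper leaves implicit.
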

\begin{proof}
The proof is by contradiction. Suppose $v$ is an eigenvector of $H_0$ with eigenvalue $0.$ Let $A$ be the matrix defined by
\begin{equation}
A_{i,j} = \left\{ \begin{array}{lr} 1, \quad &i=j, 1\le i\le n,\\ 0, \quad &{\rm otherwise}, \end{array}\right..
\end{equation} 
By construction it is clear that
\begin{equation}
H_0 = L+ \alpha_0 A
\end{equation}
and, that both $A$ is positive semidefinite. Note that
\begin{equation}
\begin{split}
0 &= v^*\, H_0\, v\\
&= v^*\,L \, v + \alpha_0 \, v^* \, A\, v. 
\end{split}
\end{equation}
Since $L$ and $A$ are positive semidefinite and $A$ is diagonal it is clear that $v$ is in the kernel of $A$ and is an eigenvector of $L$ with eigenvalue $0.$  Since $v \in {\rm ker}\, A$ it follows that its first $n$ entries must be identically zero. From \cite{specgraphfan} we observe that since $\Gamma$ is connected, the eigenvalue $0$ of $L$ has multiplicity one corresponding to the eigenvector $ (1,\ldots,1)^*.$ Thus, since $v$ is a scalar multiple of the all ones vector and its first $n$ entries are zero, it follows that $v$ is the zero vectorand hence cannot be an eigenvector of $H_0,$ completing the proof.

It follows immediately from the theory of asymptotic analysis of linear systems, see \cite{asympanal} for example, that the smallest eigenvalue of $L+\alpha_0 A$ is
\begin{equation}\label{eq:asymp_bnd}
\lambda_{\rm m}  = \alpha_0 \frac{v^* \, A v}{v^* v} + O(\alpha_0^2),
\end{equation}
from which the required result follows immediately.

\end{proof}
Plots of the minimum eigenvalue of $H_0$ as a function of $\alpha_0$ are shown for a path in Figure \ref{fig_path_a} and for a complete graph in Figure \ref{fig_comp_a}. As we can see, for small $\alpha_0$ the curve approaches the bound given in (\ref{eq:asymp_bnd}), which is shown in both plots for reference.
\begin{figure}
        \centering
        \begin{subfigure}[b]{0.45\textwidth}
                \centering
                \includegraphics[width=\textwidth]{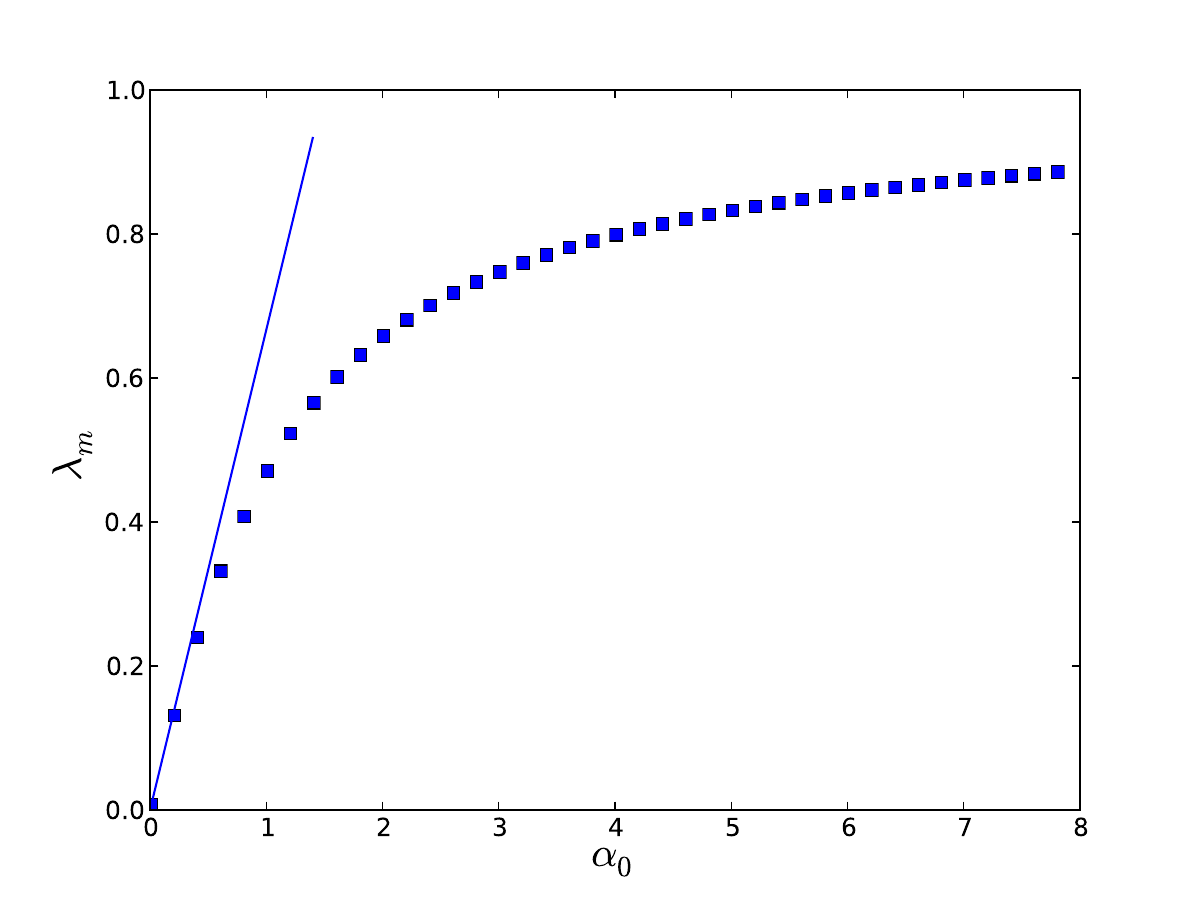}
                \caption{}
                \label{fig_path_a}
        \end{subfigure}%
        ~ 
        \begin{subfigure}[b]{0.45\textwidth}
                \centering
                \includegraphics[width=\textwidth]{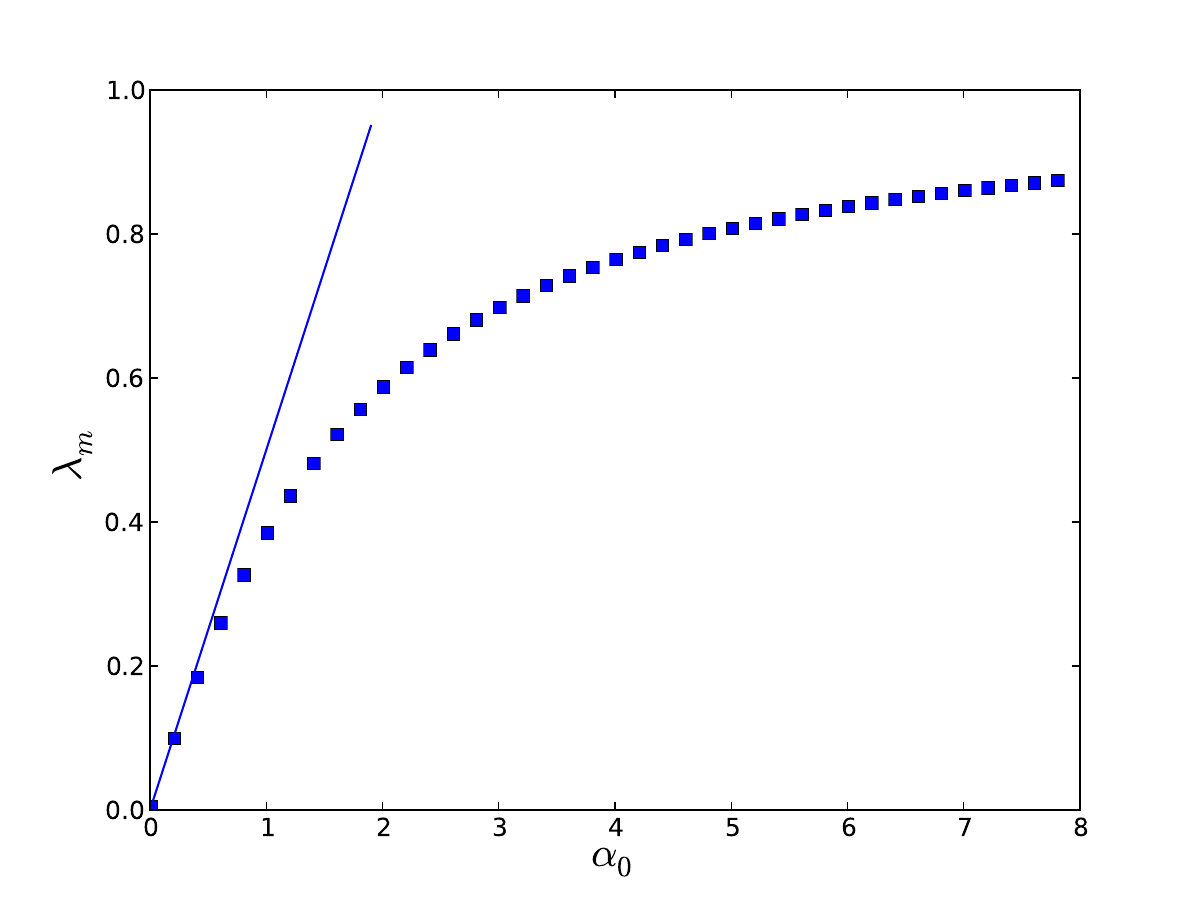}
                \caption{}
                \label{fig_comp_a}
        \end{subfigure}
        \caption{The minimum eigenvalue of the operator $H_0$ as a function of the absorption $\alpha_0$ for: a) a path of length $64$ with Neumann boundary conditions, and b) a complete graph on $64$ vertices and Neumann boundary conditions. For both plots the line corresponds to the bound in equation (\ref{eq:asymp_bnd}).}\label{fig:eig_p}
\end{figure}

\subsection{Spatially varying absorption}
When discussing diffusion problems in the continuous setting we often wish to consider media with spatially varying properties. A similar idea can be applied to graphs through a suitable modification of the graph diffusion problem (\ref{eq:unpet_diff}). Suppose the absorption at each vertex in $V$ is given by a non-negative function $\eta:V \rightarrow \mathbb{R}_{\ge0}.$ The resulting heterogeneous, or perturbed, diffusion equation is
 \begin{equation}\label{eq:pet_diff}
\left\{
     \begin{array}{lr}
\sum_{y \in V'} L(x,y) \,u(y) +\alpha_0\left[1+\eta(x)\right] u(x) = f(x), \quad &x \in V,\\
t\, u(x) +  \partial u(x) = g(x), \quad &x \in \delta V.
\end{array}
\right.
\end{equation}
Note that equation (\ref{eq:pet_diff}) also arises in the study of Schr\"{o}dinger problems on graphs where it can be interpreted as the Robin boundary value problem for the Schr\"{o}dinger operator with potential $q = \alpha_0(1+\eta).$ To write this as a linear system we let $D_\eta$ be the $(n+k)\times(n+k)$ matrix with entries
\begin{equation}\label{eq:def_eta}
(D_\eta)_{ij} = \left\{\begin{array}{lr}  \eta(x_i), \quad & i=j\le n, \\
0,\quad&{\rm otherwise.}
 \end{array} \right.
\end{equation}
It follows that $u$ solves the boundary value problem (\ref{eq:pet_diff}) if and only if it satisfies
\begin{equation}\label{eq:prelim_mat_eq}
[H_0 + \alpha_0 D_\eta] {u} = \tilde{f}.
\end{equation}
For convenience we define $H= H_0 +\alpha_0 D_\eta$ to be the matrix operator corresponding to the more general diffusion equation.

In many physical applications we are often interested in inhomogeneities confined to a region whose volume is significantly smaller than that of the whole domain. An analogous idea for diffuse scattering on graphs is to consider absorption functions, $\alpha(x),$ with small {\it support.} Given a function $\alpha(x)$ defined on a graph with vertex set $V,$ its {\it support} is the set of all vertices in $V$ for which the function $\alpha(x)$ is non-zero.

We also note that in some cases it is useful to consider graphs with no boundary. In this case we take $\delta V$ in (\ref{eq:pet_diff}) to be the empty set and consider the effects of sources placed in the interior, ie. the support of $\tilde{f}$ in (\ref{eq:prelim_mat_eq}) is contained in $V.$

\subsection{Green's functions for graphs}
Green's functions are a powerful tool for obtaining and analyzing solutions to PDEs such as the diffusion equation. When discussing similar equations on graphs, the analogous operator $G(x,y)$ is the inverse of $H$ \cite{specgraphfan}. Suppose the number of interior vertices of $\Sigma$ and the number of boundary vertices of $\Sigma$ are both finite. If we define ${G}_y = (G(x_1,y),G(x_2,y),\ldots, G(x_{n+k},y))^*$ and let $H^*$ denote the adjoint of the operator $H,$ then ${G}_y$ satisfies the linear system
\begin{equation}\label{eq:indiv_comp}
H^* {G}_y = \delta_y
\end{equation} 
where $\delta_y$ is the vector whose components are all zero except for the one corresponding to $y$ which is one. Observe that since $H= H_0 +\alpha_0 D_\eta,$ where $H_0$ and $\alpha_0$ are symmetric, it follows that $H$ is also symmetric, and so is its inverse $G.$ Using (\ref{eq:indiv_comp}) we see that if ${u}$ is a solution of (\ref{eq:pet_diff}) then
\begin{equation}
	\begin{aligned}
		{G}_y^* \,\tilde{f} &= {G}_y^*H {u}\\
		&= (H^* {G}_y)^*\, {u}\\
		&=\delta_y^*\, {u}\\
		&= u(y).\\
	\end{aligned}
\end{equation}
In this work the examples in which we are primarily interested are those for which $|V|$ and $|\delta V|$ are both finite, though when $|V|+|\delta V|$ is infinite Green's functions can also be defined, see \cite{journ_func,inf_urak2} for example.


\section{Born series}\label{sec_born}
We next discuss a useful perturbative method, called {\it Born series}, for constructing series solutions to (\ref{eq:pet_diff}) using the homogeneous Green's function.
\subsection{Construction}
Consider the matrix operator $H_0$ for the unperturbed diffusion equation (\ref{eq:unpet_diff}) and let $G_0$ be the matrix such that $G_0 H_0 = I.$ In particular we require the columns of $H_0$ to be linearly independent so that $H_0$ has a well-defined inverse. As in the previous section, we define the matrix operator $H$ for the perturbed problem (\ref{eq:pet_diff}) by
\begin{equation}\label{eq:defn_H}
H = H_0 + \alpha_0 D_\eta,
\end{equation}
where $D_\eta$ is once again the matrix defined in (\ref{eq:pet_diff}). Since $\eta \ge 0,$ $H^{-1}$ exists and satisfies
\begin{equation}
H^{-1} = \left( I + \alpha_0 G_0 D_\eta \right)^{-1} G_0
\end{equation}
and we can write a corresponding Neumann series
\begin{equation}\label{eq:Neumann_ser}
B = \left[ \sum_{n=0}^\infty (-1)^n  \alpha_0^{n} \left(G_0 D_\eta\right)^n\right] \, G_0,
\end{equation}
which, under suitable conditions on $G_0$ and $D_\eta,$ is equal to the inverse of $H$. In the context of scattering theory such an expansion is often called a {\it Born} series. Assuming the series in (\ref{eq:Neumann_ser}) converges to $H^{-1}$ it follows immediately that for any source vector $\tilde{f}$ the corresponding solution ${u}$ of the time-independent diffusion equation (\ref{eq:pet_diff}) is given by
\begin{equation}
{u} =  \left[ \sum_{n=0}^\infty (-1)^n (\alpha_0)^n \left(G_0 D_\eta\right)^n\right] \, G_0 \,\tilde{f}.
\end{equation}

\subsection{Convergence}\label{sec_con}
To show convergence of the Born series (\ref{eq:Neumann_ser}) with respect to a norm $\| \cdot \|$ it is sufficient to show that the induced operator norm of $B,$ denoted by $\|B\|,$ is bounded, as shown in the following theorem.

\begin{theorem}
\label{born_prop_1}
The series
\begin{equation}
B = \left[ \sum_{n=0}^\infty (-1)^n  \alpha_0^{n} \left(G_0 D_\eta\right)^n\right] \, G_0
\end{equation}
converges to the Green's function of the perturbed problem (\ref{eq:pet_diff}) if $ \alpha_0 \,\|G_0\|\cdot \|D_\eta\|<1.$ Moreover, if $B_N,$ the truncated operator formed by taking the first $N+1$ terms of the Born series, we have the following estimate of the error
\begin{equation}
\|B-B_N\| \le \|G_0\|^2 \frac{\alpha_0^N \|G_0\|^N \|\eta\|_\infty}{1-\alpha_0 \|G_0\|\,\|\eta\|_\infty}
\end{equation} 
\end{theorem}

\begin{proof}
The proof is an immediate consequence of the existing theory of {\it Neumann series}, see \cite{neumann} for example.
\end{proof}

In particular we see from the previous theorem that approximating the Green's function by a truncated Born series is more accurate when $\|D_\eta\| \, \|G_0\|^{-1}\, \alpha_0^{-1}$ are small, sometimes called the {\it weak scattering limit} \cite{weak_scat}. We can also obtain tighter bounds if additional information about the structure of the absorption matrix $D_\eta$ is used. In particular it is natural to assume that the matrix  $D_\eta$ has few  non-zero diagonal entries. This is analogous to the physical situation where the spatial support of the scatterers is much smaller than the total volume.

\begin{proposition}
\label{born_prop_2}
Suppose $\eta$ has support $\Lambda \subseteq V$ and let $I_\Lambda$ be the restriction of the identity matrix to the support of $\eta.$ Further define $G_{0,\Lambda} = I_\Lambda G_0 I_\Lambda$ and let $\eta_{\rm max} =  \sup_{x \in \Lambda} \eta(x). $ The series
\begin{equation}
B = \left[ \sum_{n=0}^\infty (-1)^n  \alpha_0^{n} \left(G_0 D_\eta\right)^n\right] \, G_0
\end{equation}
converges to the Green's function of the perturbed problem (\ref{eq:pet_diff}) if $ \eta_{\rm max} \alpha_0 \,\|G_{0,\Lambda}\| <1.$ Moreover, the truncation error associated with taking the first $N+1$ terms of the Born series,
\begin{equation}
B_N =\left[ \sum_{n=0}^N (-1)^n  \alpha_0^{n} \left(G_0 D_\eta\right)^n\right] \, G_0,
\end{equation}
is $O\left(  \alpha_0^{N} \,||G_{0,\Lambda}||^N \cdot \eta_{\rm max}^N\right)$ as $N\rightarrow \infty.$
\end{proposition}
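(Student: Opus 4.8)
The plan is to mirror the geometric-series argument of Proposition \ref{born_prop_1}, but to exploit the fact that $\tilde{\eta}$ is supported on $\Lambda$ so that the repeated factor in the series only ever ``sees'' the restriction $G_{0,\Lambda}$ of $G_0$ rather than all of $G_0$. The crucial observation is that, because $\tilde{\eta}$ is diagonal and vanishes off $\Lambda$, we have $I_\Lambda \tilde{\eta} = \tilde{\eta} = \tilde{\eta} I_\Lambda$. Inserting a copy of $I_\Lambda$ on either side of every interior occurrence of $G_0$ in the product $(G_0\tilde{\eta})^n$ then yields, for $n \ge 1$, the identity
\[
(G_0\tilde{\eta})^n = G_0 I_\Lambda\, \tilde{\eta}\, (G_{0,\Lambda}\tilde{\eta})^{n-1} I_\Lambda,
\]
where $G_{0,\Lambda} = I_\Lambda G_0 I_\Lambda$ is the sandwiched resolvent appearing in the statement. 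One checks the cases $n=1,2$ directly and proceeds by induction. Establishing this identity cleanly is the step I expect to require the most care, since one must track exactly which copies of $G_0$ become restricted and verify that the outer, unrestricted factor $G_0 I_\Lambda$ appears only once; this is the whole source of the improvement.

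With the identity in hand, the rest is estimation. First I would take operator norms, using $\|I_\Lambda\| = 1$, $\|G_0 I_\Lambda\| \le \|G_0\|$, and $\|\tilde{\eta}\| = \eta_{\rm max}$ (the largest diagonal entry of $\tilde{\eta}$), together with submultiplicativity applied to the repeated middle factor, to obtain
\[
\|(G_0\tilde{\eta})^n\| \le \|G_0\|\,\eta_{\rm max}^{\,n}\,\|G_{0,\Lambda}\|^{\,n-1} \qquad (n\ge 1).
\]
The key point is that the possibly large constant $\|G_0\|$ now sits outside the geometric factor; what governs convergence is $\|G_{0,\Lambda}\|$, not $\|G_0\|$. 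Substituting into $\|B\| \le \|G_0\|\sum_{n\ge 0}\alpha_0^n\|(G_0\tilde{\eta})^n\|$ and pulling out one factor of $\|G_{0,\Lambda}\|^{-1}$ produces the geometric series $\sum_{n\ge 1}(\alpha_0\eta_{\rm max}\|G_{0,\Lambda}\|)^n$, which converges precisely when $\alpha_0\eta_{\rm max}\|G_{0,\Lambda}\| < 1$. Hence $\|B\| < \infty$ under the stated hypothesis.

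Once boundedness of $\|B\|$ is secured, I would conclude $B = H^{-1}$ by the identical telescoping computation used in Proposition \ref{born_prop_1}: multiplying the series on the right by $H = H_0 + \alpha_0\tilde{\eta}$ and using $G_0 H_0 = I$ collapses all but the leading term, giving $BH = I$. Finally, for the truncation error I would bound the tail directly,
\[
\|B - B_N\| \le \|G_0\|\sum_{n=N+1}^\infty \alpha_0^n\|(G_0\tilde{\eta})^n\| \le \frac{\|G_0\|^2}{\|G_{0,\Lambda}\|}\cdot\frac{(\alpha_0\eta_{\rm max}\|G_{0,\Lambda}\|)^{N+1}}{1-\alpha_0\eta_{\rm max}\|G_{0,\Lambda}\|},
\]
whose leading behaviour as $N\to\infty$ is $O\!\big(\alpha_0^{N}\|G_{0,\Lambda}\|^{N}\eta_{\rm max}^{N}\big)$, as claimed. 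The only genuine novelty over Proposition \ref{born_prop_1} is the sandwiching identity; everything downstream is the same geometric-series bookkeeping, now with the sharper quantity $\|G_{0,\Lambda}\|$ in place of $\|G_0\|$ inside the ratio.
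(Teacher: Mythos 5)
Your proposal is correct and follows essentially the same route as the paper's proof: the identical sandwiching observation $\tilde{\eta} = I_\Lambda \tilde{\eta} = \tilde{\eta} I_\Lambda$ leading to $(G_0\tilde{\eta})^n = G_0\tilde{\eta}\,(G_{0,\Lambda}\tilde{\eta})^{n-1}$, followed by the same geometric-series tail estimate giving the $O\bigl(\alpha_0^N\,\|G_{0,\Lambda}\|^N\,\eta_{\rm max}^N\bigr)$ truncation bound. Your only (harmless) additions are the explicit induction for the identity and the explicit repetition of the telescoping verification $BH = I$, which the paper carries out only in Proposition \ref{born_prop_1} and leaves implicit here.
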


\begin{proof}
Since $D_\eta$ is a diagonal matrix it follows that $\|D_\eta\| = \eta_{\rm max} = \sup_{x \in V} |\eta(x)|.$ Let $\Lambda$ be the support of $\eta$ and let $I_\Lambda$ be the restriction of the identity matrix to the support of $\eta.$ In particular, $I_\Lambda$ is the diagonal matrix $I_{\Lambda}(x,y) = \delta_{x,y} \chi_{\{x \in \Lambda\}},$ where $\chi_A$ denotes the characteristic function of the set $A.$  Note that $D_\eta = I_\Lambda D_\eta = D_\eta I_\Lambda$ and thus if we define $G_{0,\Lambda} = I_\Lambda G_0 I_\Lambda$ and let $n >1,$ then $(G_0 D_\eta)^n = G_0 D_\eta (I_\Lambda G_0 I_\Lambda D_\eta)^{n-1} = G_0 D_\eta (G_{0,\Lambda} D_\eta)^{n-1}.$ Defining the truncated operator $B_N = \sum_{n=0}^{N} (-1)^k \alpha_0 \left(G_0 D_\eta\right)^n G_0,$ we note that
\begin{equation}
B_N = G_0 +G_0 D_\eta\sum_{n=1}^{N-1} (-1)^k\alpha_0^n \left( G_{0,\Lambda}D_\eta\right)^{n-1} G_0.
\end{equation}
The result now follows immediately from the theory of Neumann series.
\end{proof}


\section{Examples}\label{sec_examp}
Having developed the theory of Born series in the previous section, provided the perturbations to the absorption are sufficiently small, we can now apply this method to approximate Green's functions for which the background Green's function is known. A non-exhaustive list of families graphs for which the background Green's function is known is given in Table \ref{table:background_greens_functions}.

\afterpage{%
    \clearpage
    \thispagestyle{empty}
    \begin{landscape}%
\begin{longtable}{ | c | m{3cm} | m{9cm} |  c|}
    \hline
    Name & Figure & Background Green's function& Reference\\[5pt] \hline &&&\\
    Path&
    \begin{minipage}{.2\textwidth}
    \begin{center}
      \includegraphics[width=\linewidth]{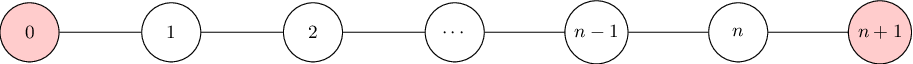}
      \end{center}
    \end{minipage}
    &
    $G(i,j) =\frac{(a r^i - a^{-1} r^{-i})(a r^{n+1-j} - a^{-1} r^{-(n+1-j)})}{\left( r- \frac{1}{r}\right)\left(a^2 r^{n+1} - a^{-2} r^{-(n+1)}\right)},$ \scriptsize $0 \le i \le j \le n+1,$ $r+1/r = 2+ \alpha_0,$ and $a =\left[ 1+\frac{ (r^2-1)}{r[1+t-r]}\right]^{1/2}$&\cite{eig_path,discgreen}
    \\[10pt] \hline &&& \\
     Cycle&
    \begin{minipage}{.2\textwidth}
    \begin{center}
      \includegraphics[width = 0.6\textwidth]{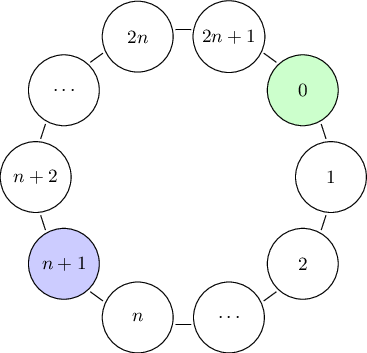} 
      \end{center}
    \end{minipage}
    &
$$G(i,j) =  \frac{r^{n+1-|i-j|_{\rm min}} + r^{-(n+1-|i-j|_{\rm min})}}{\left( r - \frac{1}{r} \right) \left(r^{n+1} - r^{-(n+1)}\right)},$$ \newline \scriptsize for all $0 \le i,j \le 2n+1,$ where $|i-j|_{\rm min} = \min\{ |i-j|, \, 2n+2-|i-j| \}.$&\cite{eig_path,ellis}
    \\[10pt] \hline &&&\\
   M\"{o}bius ladder&
    \begin{minipage}{.2\textwidth}
    \begin{center}
      \includegraphics[width = 0.6\textwidth]{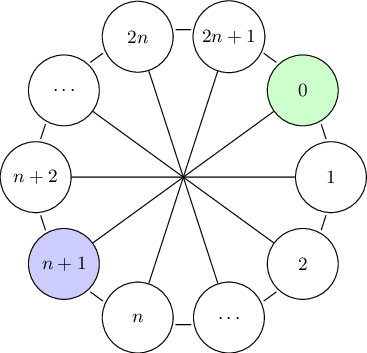} 
      \end{center}
    \end{minipage}
    &
    \begin{minipage}{.5\textwidth}
    \scriptsize
\begin{equation}\nonumber
G(i,j) = \left\{ \begin{array}{lr} g_1(|i-j|_{\rm min}) + g_2(|i-j|_{\rm min}), \, & |i-j|\le \frac{n+1}{2}+1  \\
 g_1(|i-j|_{\rm min})-g_2(|i-j|_{\rm min}),\, & |i-j| > \frac{n+1}{2}\end{array} \right.
\end{equation}
\scriptsize
for all $0 \le i,j \le 2n+1,$ where $|i-j|_{\rm min} = \min\{ |i-j|, \, 2n+2-|i-j|,\, \left|\,|i-j| - (n+1) \right| \},$
\begin{equation}\nonumber
g_k(s) = \frac{\left(a_k r_k^{\frac{n+1}{2}}-\frac{r_k^{-\frac{n+1}{2}}}{a_k}\right)\left(a_k r_k^{\frac{n+1}{2}-s}-\frac{r_k^{-[\frac{n+1}{2}-s]}}{a_k}\right) }{(r_k-r_k^{-1}) (a_k^2 r_k^{n+1}-a_k^{-2} r_k^{-(n+1)})},
\end{equation} 
$k\in\{1,2\},$ $r_k$ satisfies $r_k+1/r_k = 2k+\alpha_0,$ \\$a_1 = \left[1+\frac{r^2_1-1}{r_1(1+\frac{\alpha_0}{2}-r_1)} \right]^\frac{1}{2},$ $a_2 = 1.$
\vspace{0.2 cm}

\end{minipage}
   & Appendix A\\[15pt] \hline &&&\\
        Complete graph&
    \begin{minipage}{.15\textwidth}
    \begin{center}
      \includegraphics[width=\linewidth]{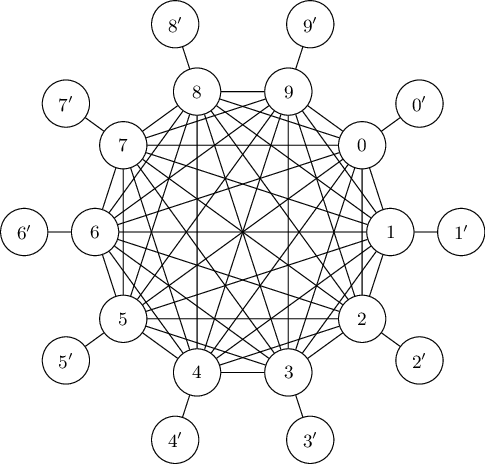}
      \end{center}
    \end{minipage}
    &
    \begin{minipage}{0.5\textwidth}
\begin{equation}\nonumber
   G(x,y) = \left\{
     \begin{array}{lr}
       \frac{\sigma}{(\sigma-1)(\sigma-1+d)} &{\rm if}\,\, x =y \in V,\\
       
       \frac{1}{(\sigma-1)(\sigma-1+d)} &{\rm if}\,\, x \neq y,\, x,y \in V,\\
       \frac{\gamma^2 \sigma}{(\sigma-1)(\sigma-1+d)}+\gamma &{\rm if}\,\, x = y\in \delta V,\\
     \end{array}
   \right.
\end{equation} 
\scriptsize
where $\gamma = 1/(1+t)$ and $\sigma = 2+ \alpha_0 - \gamma.$
\vspace{0.2 cm}

     \end{minipage} & Appendix A

    \\[10pt] \hline &&& \\
    Infinite plane&
    \begin{minipage}{.2\textwidth}
    \begin{center}
      \includegraphics[width=0.4\linewidth]{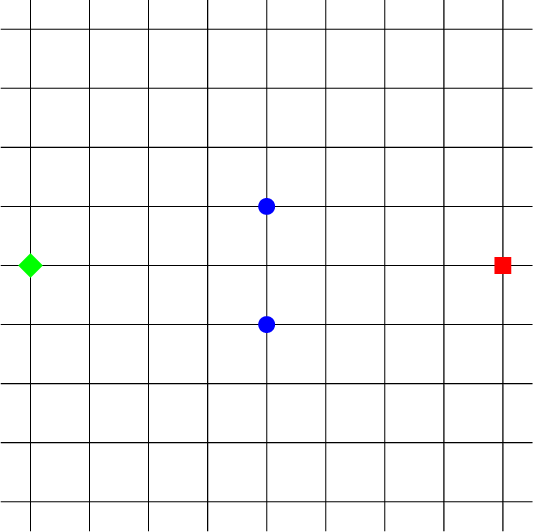}
      \end{center}
    \end{minipage}
    &
    \begin{minipage}{0.5\textwidth}
    \tiny
    \begin{equation}\nonumber
G((m_1,n_1),(m_2,n_2))= \frac{1}{2\pi} \int_0^\pi \frac{\cos\left( d_- \,v\right)\, (\cos(v))^{d_+}}{(a+\sqrt{a^2-\cos(v)})^{d_+} \sqrt{a^2 - \cos(v)}}\,{\rm d}v
\end{equation}
\scriptsize
where $a = 1+\alpha_0/4,$ and $d_\pm = |m_2-m_1| \pm |n_2-n_1|.$
\vspace{ 0.1 cm}
     \end{minipage}& Appendix A
    \\[10pt] \hline
  \caption{Summary of Green's function results}\label{table:background_greens_functions}
\end{longtable}
    \end{landscape}
    \clearpage
}
\normalsize
\clearpage

\section{Representation theory and the background Green's function}\label{sec_rep_theory}
In the previous section we obtained the background Green's function for a variety of examples by solving corresponding recurrence relations. In every example, excluding the finite path, clearly-visible symmetries were employed in an intuitive way to reduce the problem to a more tractable set of equations. This connection between symmetry and PDE analogues on graphs can be formalized using the language of representation theory. 

\subsection{Cayley graphs of finite abelian groups}

As a particularly important special case we first consider Cayley graphs of abelian groups. In particular, let $G$ be a finite abelian group and $S$ be a symmetric subset of the elements of $G.$ Recall that $S$ is a symmetric subset of a group $G$ if $g \in S$ implies $g^{-1} \in S.$ This condition is required to ensure that the resulting graph is undirected and the associated Laplacian operator is symmetric. We can then define the the Cayley graph $X(G,S)$ to be the graph whose vertices are indexed by the elements of $G$ with edge set \cite{terras}
\begin{equation}
E = \{(g,h) \in G\times G \, |\, gh^{-1} \in S \}.
\end{equation}
Looking at the examples considered in the previous section we note that the loop, M\"{o}bius ladder, and complete graph are all Cayley graphs with group $G=\mathbb{Z}/ n \mathbb{Z},$ for some $n,$ and $S = \{-1,1\},$ $\{-1,1,-n/2,n/2\}$ and $\{-n+1,-n+2,\ldots,-1,1,\ldots, n-2,n-1\},$ respectively. The infinite path is the Cayley graph of the free group on 2 generators and the Bethe lattice with coordination number $k$ is the Cayley graph of the free group on $k$ generators. Finally, the two-dimensional lattice is the Cayley graph with group $\mathbb{Z} \times \mathbb{Z}$ and generator set $S = \{(-1,0),(0,-1), (1,0),(0,1)\}.$

For Cayley graphs the combinatorial Laplacian can be compactly expressed using the convolution operator $\ast: \ell^2(G) \times \ell^2(G) \rightarrow \ell^2(G)$ defined by
\begin{equation}
(f\ast g)(x) = \sum_{y \in G} f(y) \,g(y^{-1}x).
\end{equation}
It is clear that the adjacency matrix $A$ for $X(G,S)$ is given by \cite{terras}
\begin{equation}
A (f)(x) = (\delta_S \ast f)(x)
\end{equation}
where $\delta_S$ is the characteristic function on the set $S$ and hence if $k = |S|$ then
\begin{equation}\label{eq:def_delt_lap}
L(f)(x) =( k\, I - A)(f)(x) = k f(x) - (\delta_S \ast f)(x).
\end{equation}

In order to diagonalize this operator using Fourier analysis, we next define the {\it dual group}
\begin{equation}\label{eq:ab_hom}
\hat{G} = {\rm Hom}(G,\mathbb{T})
\end{equation}
where $\mathbb{T}$ is the multiplicative group of complex numbers with modulus one. If $\chi \in \hat{G}$ then we call $\chi$ a {\it character}. If $G$ is a finite abelian group then it is self-dual \cite{serre}, and hence $G$ is isomorphic to $\hat{G}.$ We then have the following proposition, proved in \cite{terras}.

\begin{proposition}\label{prop_terras}
If $h \in \ell^2(G)$ then the eigenvectors of the corresponding convolution operator are equal to the characters of $G.$ In particular, if $\chi \in \hat{G}$ then
\begin{equation}
(h \ast \chi)(x) = \hat{h}(\chi)\, \chi(x)
\end{equation} 
for all $x \in G$ and where $\hat{h}(\chi) = \sum_{x \in G} h(x) \overline{\chi(x)}.$
\end{proposition}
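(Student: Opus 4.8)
The plan is to verify the eigenvalue relation by a direct computation from the definition of the convolution, the only input being the defining homomorphism property of a character. First I would expand
\begin{equation*}
(h \ast \chi)(x) = \sum_{y \in G} h(y)\, \chi(y^{-1}x),
\end{equation*}
using the definition of $\ast$ given above. Since $\chi \in \hat{G} = \mathrm{Hom}(G,\mathbb{T})$ is a group homomorphism, it multiplies over the group operation, so that $\chi(y^{-1}x) = \chi(y^{-1})\,\chi(x)$.

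The key observation is then that $\chi$ takes values in $\mathbb{T}$, the multiplicative group of unit-modulus complex numbers, so each value satisfies $|\chi(y)| = 1$ and hence $\chi(y^{-1}) = \chi(y)^{-1} = \overline{\chi(y)}$. Substituting this and factoring the term $\chi(x)$ out of the sum over $y$ yields
\begin{equation*}
(h \ast \chi)(x) = \left( \sum_{y \in G} h(y)\, \overline{\chi(y)} \right) \chi(x) = \hat{h}(\chi)\, \chi(x),
\end{equation*}
which is exactly the claimed relation, with the bracketed sum recognized as $\hat{h}(\chi)$. This shows that each character $\chi$ is an eigenvector of the operator $f \mapsto h \ast f$ with eigenvalue $\hat{h}(\chi)$.

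I do not expect any genuine obstacle here: the entire argument is the short line of algebra above, and it relies only on the two elementary facts that characters are homomorphisms and that inversion on $\mathbb{T}$ coincides with complex conjugation. The one point worth remarking on, in support of the stronger phrasing that the characters are \emph{exactly} the eigenvectors, is completeness. Since $G$ is a finite abelian group, its $|G|$ distinct characters form an orthogonal basis of $\ell^2(G)$ (the self-duality referenced in the discussion of $\hat{G}$ above), and therefore they simultaneously diagonalize the convolution operator, leaving no further eigenvectors to be found.
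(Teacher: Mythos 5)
Your direct computation is correct and is the standard argument: the paper itself gives no proof of this proposition, deferring to \cite{terras}, and your calculation (expand the convolution, use the homomorphism property, and use $\chi(y^{-1})=\overline{\chi(y)}$ since $\chi$ takes values in $\mathbb{T}$) is exactly the one found there. One minor caution: when $\hat{h}$ takes equal values on distinct characters, nontrivial linear combinations of those characters are also eigenvectors, so your closing claim that there are ``no further eigenvectors to be found'' (like the proposition's own phrasing) should be read as asserting that the characters form a complete eigenbasis, not that they literally exhaust the eigenvectors.
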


The following corollaries are immediate consequences of Proposition \ref{prop_terras}.
\begin{corollary}
The characters of $G$ are the eigenvectors of $L$ with corresponding eigenvalues
\begin{equation}
\lambda_\chi = k-\sum_{s \in S} \overline{\chi(s)}.
\end{equation}
\end{corollary}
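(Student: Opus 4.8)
The plan is to apply Proposition \ref{prop_terras} directly to the convolution representation of the Laplacian. The key observation is that equation (\ref{eq:def_delt_lap}) expresses $L$ as $L(f)(x) = k\,f(x) - (\delta_S \ast f)(x)$, so $L$ differs from a scalar multiple of the identity only by the convolution operator $f \mapsto \delta_S \ast f$. Since Proposition \ref{prop_terras} tells us that every character $\chi \in \hat G$ is an eigenvector of any convolution operator $h \ast (\cdot)$, it will follow immediately that $\chi$ is also an eigenvector of $L$.

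First I would fix a character $\chi \in \hat G$ and evaluate $L(\chi)$ pointwise. Writing
\begin{equation}
L(\chi)(x) = k\,\chi(x) - (\delta_S \ast \chi)(x),
\end{equation}
I would then invoke Proposition \ref{prop_terras} with the choice $h = \delta_S$. This yields $(\delta_S \ast \chi)(x) = \widehat{\delta_S}(\chi)\,\chi(x)$, where $\widehat{\delta_S}(\chi) = \sum_{x \in G} \delta_S(x)\,\overline{\chi(x)}$ according to the formula in the proposition. The only computation required is to simplify this Fourier coefficient: since $\delta_S$ is the characteristic function of $S$, the sum collapses to $\widehat{\delta_S}(\chi) = \sum_{s \in S} \overline{\chi(s)}$.

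Combining these two facts gives
\begin{equation}
L(\chi)(x) = \Bigl( k - \sum_{s \in S} \overline{\chi(s)} \Bigr)\,\chi(x),
\end{equation}
which holds for every $x \in G$, so $\chi$ is an eigenvector of $L$ with eigenvalue $\lambda_\chi = k - \sum_{s \in S} \overline{\chi(s)}$, exactly as claimed.

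Honestly, there is no real obstacle here: the corollary is an immediate consequence of Proposition \ref{prop_terras} together with the convolution form (\ref{eq:def_delt_lap}) of $L$. The only point deserving a moment's care is the evaluation of $\widehat{\delta_S}(\chi)$, where one must correctly identify that summing $\overline{\chi(x)}$ against the indicator of $S$ restricts the sum to $s \in S$; the appearance of the complex conjugate (rather than $\chi(s)$ itself) is a direct consequence of the convention $\widehat{h}(\chi) = \sum_x h(x)\overline{\chi(x)}$ fixed in the statement of the proposition.
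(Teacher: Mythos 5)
Your proposal is correct and follows exactly the route the paper intends: the paper states this corollary as an immediate consequence of Proposition \ref{prop_terras} applied to the convolution form (\ref{eq:def_delt_lap}) of $L$, and your argument simply fills in the one-line computation $\widehat{\delta_S}(\chi)=\sum_{s\in S}\overline{\chi(s)}$ that the paper leaves implicit. No gaps; nothing further is needed.
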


\begin{corollary}
The Green's function for the uniform diffusion equation is
\begin{equation}\label{eq:rep_green}
G (f)(x) = \sum_{\chi \in \hat{G}} \,\,\sum_{y \in G} \frac{1}{\lambda_\chi+\alpha_0} f(y) \overline{\chi(y)}\chi(x).
\end{equation}
\end{corollary}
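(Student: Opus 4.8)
The plan is to recognize the right-hand side of (\ref{eq:rep_green}) as the spectral decomposition of the inverse of the operator $H_0 = L + \alpha_0 I$ and to verify it against the defining property of the Green's operator, namely $H_0 G = I$ on $\ell^2(G)$. Since $G$ is finite, $H_0$ acts on the finite-dimensional space $\ell^2(G)$, so no convergence questions arise and it is enough to check this single identity. First I would invoke the preceding corollary, which exhibits the characters $\{\chi : \chi \in \hat{G}\}$ as a complete set of eigenvectors of $L$ with eigenvalues $\lambda_\chi = k - \sum_{s\in S}\overline{\chi(s)}$. Because $L$ and $I$ commute, the same characters diagonalize $H_0$, with $H_0 \chi = (\lambda_\chi + \alpha_0)\chi$, and by Proposition \ref{h_0_prop} (or simply positivity of $L$ together with $\alpha_0 > 0$) each $\lambda_\chi + \alpha_0$ is strictly positive, so $H_0$ is invertible and the reciprocals in (\ref{eq:rep_green}) are well defined.

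Next I would record the completeness and orthogonality of the characters of a finite abelian group, $\sum_{x\in G}\chi(x)\overline{\psi(x)} = |G|\,\delta_{\chi,\psi}$, which yields the Fourier inversion formula with $\hat{f}(\chi) = \sum_{y\in G} f(y)\overline{\chi(y)}$ as in Proposition \ref{prop_terras}. Writing the candidate operator in the compact form
\[
G(f) = \sum_{\chi \in \hat{G}} \frac{\hat{f}(\chi)}{\lambda_\chi+\alpha_0}\,\chi,
\]
I would apply $H_0$ term by term, using $H_0\chi = (\lambda_\chi+\alpha_0)\chi$ to collapse each summand to $\hat{f}(\chi)\,\chi$, and then invoke Fourier inversion to recover $f$. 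Equivalently, one can compute the kernel $G(x,y) = \sum_{\chi}(\lambda_\chi+\alpha_0)^{-1}\chi(x)\overline{\chi(y)}$ and check directly that $(H_0 G)(x,y) = \delta_{x,y}$ by the orthogonality relation above. Either route shows $H_0 G = I$, identifying $G$ as the Green's operator for the uniform diffusion equation.

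The one point I would watch most carefully — and the only genuine obstacle — is the normalization constant, since the orthogonality relation carries a factor of $|G|$ and the Fourier inversion formula correspondingly reads $f = \tfrac{1}{|G|}\sum_{\chi}\hat{f}(\chi)\,\chi$. One must confirm that the convention of Proposition \ref{prop_terras} for $\hat{f}$ and the summation over $\hat{G}$ are normalized consistently with the prefactor displayed in (\ref{eq:rep_green}); otherwise the verification produces $|G|\,f$ rather than $f$. Once the normalization is fixed, everything else is a routine consequence of the spectral theorem for the self-adjoint, positive operator $H_0$ on the finite-dimensional inner product space $\ell^2(G)$.
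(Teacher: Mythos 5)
Your proposal is correct and follows essentially the same route as the paper: the paper's proof diagonalizes $H_0 = L + \alpha_0 I$ in the character basis, writing $L = X \Lambda X^{\dag}$ with $X_{i,j} = \chi_j(x_i)$ and inverting, which is exactly the spectral decomposition you verify by checking $H_0 G = I$ term by term. Your caution about the normalization is warranted and in fact catches a slip the paper glosses over: since characters take values in $\mathbb{T}$, the orthogonality relations give $X X^{\dag} = |G|\, I$ rather than $I$ as the paper asserts, so the displayed formula for $G$ should carry a prefactor $1/|G|$ (equivalently, the characters must be normalized to unit $\ell^2(G)$ norm), and your observation that the verification otherwise produces $|G|\,f$ instead of $f$ is precisely this discrepancy seen from the other side.
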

\begin{proof}
We begin by observing that since the eigenfunctions of $L$ are the characters of $G$ through an appropriate change of basis we may diagonalize $L.$ If we denote the elements of $G$ by $x_1,\ldots, x_k$ and the characters of $G$ by $\chi_1,\ldots,\chi_k$ then we can form the corresponding $k \times k$ matrix defined by
\begin{equation}
X_{i,j} = \chi_j(x_i).
\end{equation}
It follows from the above results that the matrix representation of $L$ can be written as
\begin{equation}
L = X \begin{pmatrix}\lambda_{\chi_1}\\&\lambda_{\chi_2}\\&&\ddots\\&&&\lambda_{\chi_{k-1}}\\&&&&\lambda_{\chi_k} \end{pmatrix} X^{\dag},
\end{equation}
where $X^\dag$ denotes the conjugate transpose of the matrix $X.$ Since $XX^\dag = I,$ the $k \times k$ identity matrix, it follows that
\begin{equation}
\begin{split}
H_0 &= L + \alpha_0 \,I \\
&= X \begin{pmatrix}\lambda_{\chi_1}+\alpha_0\\&\lambda_{\chi_2}+\alpha_0\\&&\ddots\\&&&\lambda_{\chi_{k-1}}+\alpha_0\\&&&&\lambda_{\chi_k}+\alpha_0 \end{pmatrix} X^{\dag},
\end{split}
\end{equation}
from which the formula (\ref{eq:rep_green}) follows immediately, using the fact that $G = H_0^{-1}.$
\end{proof}

\subsection{Cayley graphs of finite groups}

The results of the previous section can be extended to non-abelian groups in a natural way. As before, we use the fact that the operator $H_0$ can be written as a convolution operator on $\ell^2(G)$ to find a spectral decomposition of its Fourier transform. Applying the inverse Fourier transform yields a complete set of eigenvectors and eigenvalues of $H_0$ from which it is straightforward to obtain an expression for the background Green's function $G_0$ of the corresponding Cayley graph.

Before presenting the main results we first introduce some basic definitions and results associated with Fourier analysis on finite non-abelian groups (for a more complete description see \cite{terras}, for example). As in \cite{serre}, let $\rho$ be a homomorphism from the group $G$ to the automorphism group of $V,$ a $k$-dimensional vector space over $\mathbb{C}.$ Such a map $\rho$ is called a $k$-{\it dimensional representation} of $G$ and is said to be {\it irreducible} if the only subspaces of $V$ which are invariant under $\rho(g)$ for all $g \in G$ are $\bf{0}$ and $V.$  We say that two representations $\rho:G \rightarrow {\rm GL}(V)$ and $\tau: G \rightarrow {\rm GL}(W)$ are {\it equivalent} \cite{serre} if there exists an isomorphism $f:V \rightarrow W$ such that $\tau(g) = f \circ \rho(g) \circ f^{-1}.$

Given a representation $\rho: G \rightarrow{\rm GL}(V)$ of $G,$ let $d_\rho={\rm dim}(V)$ denote its degree. If $f \in \ell^1(G)$ then its {\it Fourier transform} is the map $\mathcal{F}[f]:\rho \rightarrow \mathbb{C}^{d_\rho}\times \mathbb{C}^{d_\rho}$ defined by \cite{terras}
\begin{equation}
\mathcal{F}[f](\rho) =\sum_{g \in G} f\,(g) \rho(g).
\end{equation}
Observe that the Fourier transform of a function $f$ at a representation $\rho$ will, in general, be matrix-valued and is called the {\it Fourier coefficient} matrix of $f$ at $\rho.$ If two representations $\rho_1$ and $\rho_2$ are equivalent then it is straightfoward to show that the corresponding Fourier coefficient matrices are similar and hence the Fourier transform of $f$ is completely determined by its value on a maximal set of inequivalent irreducible representations, called the {\it dual} and denoted by $\hat{G}.$ Note that if $G$ is abelian then its irreducible representations must be of degree one and this definition of $\hat{G}$ is equiavalent to the one given in (\ref{eq:ab_hom}).

Given a function $h: \rho \in \hat{G} \rightarrow \mathbb{C}^{d_\rho} \times \mathbb{C}^{d_\rho}$ we can define its inverse Fourier transform by the following expression \cite{terras}
\begin{equation}
\check{h} = \mathcal{F}^{-1}[h](g) =\frac{1}{|G|} \sum_{\rho \in \hat{G}} d_{\rho} {\rm Tr}\left(\rho(g^{-1}) h(\rho)\right).
\end{equation}
The proof that $\mathcal{F}^{-1} \mathcal{F}$ is the identity operator on $\ell^2(G)$ and is independent of the choice of the elements in $\hat{G},$ provided they form a maximal set of irreducible inequivalent representations can be found in \cite{terras}.

From the definitions given above it is straightforward to prove the following result \cite{terras} which will be useful in decomposing the operator $H_0.$
\begin{proposition}\label{non_ab_conv}
Consider the convolution operator $*:\ell^1(G) \times \ell^1(G) \rightarrow \ell^1(G)$ defined by
\begin{equation}
f *h (g)  = \sum_{r \in G} f(r^{-1}) h(rg).
\end{equation}
If $\hat{f} = \mathcal{F}[f]$ and $\hat{h} = \mathcal{F}(h)$ then
\begin{equation}
\mathcal{F}[f*h](\rho) = \hat{f}(\rho) \hat{h}(\rho).
\end{equation}
\end{proposition}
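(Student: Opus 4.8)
The plan is to prove the claim by direct computation: substitute the definitions of the Fourier transform and of the given convolution into $\mathcal{F}[f*h](\rho)$, then reindex the resulting double sum so that it factors as a product of the two Fourier coefficient matrices. The one point demanding genuine care is that the $\rho(g)$ are matrices which need not commute, so every interchange and change of variables must preserve the left-to-right order of the factors. The convolution $f*h(g) = \sum_{r\in G} f(r^{-1})h(rg)$ is in fact defined precisely so that $\hat f$ emerges on the left and $\hat h$ on the right.

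First I would expand using the definition $\mathcal{F}[f](\rho) = \sum_{g\in G} f(g)\rho(g)$, obtaining
\[
\mathcal{F}[f*h](\rho) = \sum_{g \in G} (f*h)(g)\,\rho(g) = \sum_{g \in G}\sum_{r \in G} f(r^{-1})\,h(rg)\,\rho(g),
\]
and interchange the two finite sums to bring the sum over $r$ outside. Next I would substitute $s = rg$ in the inner sum; for fixed $r$, as $g$ ranges over $G$ so does $s$, and $g = r^{-1}s$ gives $\rho(g) = \rho(r^{-1}s)$.

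The key step is then to invoke that $\rho$ is a homomorphism, so $\rho(r^{-1}s) = \rho(r^{-1})\rho(s)$, which lets me factor the expression as
\[
\sum_{r \in G} f(r^{-1})\,\rho(r^{-1}) \sum_{s \in G} h(s)\,\rho(s).
\]
The inner sum is exactly $\hat h(\rho)$, and it already sits to the right of every factor involving $r$, as required. Finally I would reindex the outer sum by $t = r^{-1}$, which again ranges over all of $G$, to recognize $\sum_{t \in G} f(t)\,\rho(t) = \hat f(\rho)$ and conclude $\mathcal{F}[f*h](\rho) = \hat f(\rho)\,\hat h(\rho)$.

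I expect no substantive obstacle here: since $G$ is finite every sum is finite, so the interchange of summation and both changes of variable are automatically justified, and the result is essentially the non-abelian convolution theorem. The only thing to watch throughout is the noncommutativity, namely that the matrix factors are never inadvertently transposed in order; checking that $\hat f(\rho)$ stays on the left is the entire content of the argument.
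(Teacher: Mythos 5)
Your proof is correct and is precisely the ``straightforward from the definitions'' argument the paper alludes to: the paper itself gives no proof of Proposition \ref{non_ab_conv}, deferring to the cited reference, and your computation (interchange of finite sums, the substitution $s = rg$, the homomorphism property $\rho(r^{-1}s) = \rho(r^{-1})\rho(s)$, and the reindexing $t = r^{-1}$) is the standard non-abelian convolution theorem proof. You are also right to flag order-preservation of the matrix factors as the only delicate point, and your argument handles it correctly, with $\hat{f}(\rho)$ emerging on the left as required.
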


We can now employ the theory developed above to analyze the spectrum of the Cayley graph $X(G,S)$ with vertices once again indexed by the elements of $G$ and edge generating set $S.$ We begin by observing that the adjacency operator $A$ for $X(G,S)$ can be written as
\begin{equation}
A[f](g) = \chi_S * f (g), \quad \forall g \in G
\end{equation}
where $\chi_S$ is the characteristic function of $S.$ It follows immediately that if $e \in G$ is the identity element then
\begin{equation}
H_0 [f](g) = \left( (|S|+\alpha_0)\chi_e-\chi_S \right) *f(g)
\end{equation}
and thus from Proposition \ref{non_ab_conv} that
\begin{equation}\label{eq:non_ab_eig}
\hat{H_0}[\hat{f}] (\rho) = \left(|S|+\alpha_0\right) \hat{f}(\rho)- \sum_{g \in S} \rho(g) \hat{f}(\rho).	
\end{equation}
The problem then becomes to diagonalize the operator $\hat{H}_0$ in Fourier space by finding suitable eigenfunctions of (\ref{eq:non_ab_eig}). Using the properties of the Fourier transform outlined above we can then find the corresponding eigenfunctions of $H_0$ in $\ell^2(G).$

For ease of exposition, let $M(\rho) = \sum_{g \in S} \rho(g)$ in which case we have the following useful lemma.
\begin{lemma}
If $M(\rho) = \sum_{g \in S} \rho(g)$ where $\rho$ is a representation of a finite group, $G,$ then $M(\rho)$ is diagonalizable.
\end{lemma}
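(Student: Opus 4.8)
The plan is to exhibit $M(\rho)$ as a self-adjoint operator with respect to a suitably chosen Hermitian inner product on the representation space and then to invoke the spectral theorem. The two ingredients are that every representation of a finite group can be made unitary by averaging, and that the symmetry of $S$ forces $M(\rho)$ to equal its own adjoint in the resulting inner product.

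First I would construct a $G$-invariant Hermitian inner product on $V$. Starting from an arbitrary Hermitian inner product $\langle\cdot,\cdot\rangle_0$ on $V$, I would average over $G$ and define
$$
\langle u,v\rangle = \sum_{h\in G}\langle \rho(h)u,\,\rho(h)v\rangle_0.
$$
Since $G$ is finite the sum is well-defined, and the substitution $h\mapsto hg$ shows $\langle\rho(g)u,\rho(g)v\rangle = \langle u,v\rangle$ for every $g\in G$. Hence each $\rho(g)$ is unitary for $\langle\cdot,\cdot\rangle$, which gives $\rho(g)^* = \rho(g)^{-1} = \rho(g^{-1})$, where $A^*$ denotes the adjoint of an operator $A$ with respect to this inner product.

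Next I would use the symmetry of $S$. Taking adjoints term by term,
$$
M(\rho)^* = \sum_{g\in S}\rho(g)^* = \sum_{g\in S}\rho(g^{-1}),
$$
and since $S$ is symmetric the involution $g\mapsto g^{-1}$ permutes $S$, so reindexing the sum yields $M(\rho)^* = \sum_{g\in S}\rho(g) = M(\rho)$. Thus $M(\rho)$ is self-adjoint on the finite-dimensional complex inner product space $(V,\langle\cdot,\cdot\rangle)$.

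Finally I would apply the spectral theorem: a self-adjoint operator on a finite-dimensional complex inner product space admits an orthonormal basis of eigenvectors, and is therefore diagonalizable (indeed unitarily diagonalizable, with real eigenvalues). This completes the argument. No step presents a genuine obstacle; the only point requiring care is verifying the invariance of the averaged inner product, after which diagonalizability is immediate. I would also remark that the argument yields slightly more, namely that the eigenvalues of $M(\rho)$ are real, consistent with the abelian case in which the eigenvalues $\sum_{s\in S}\chi(s)$ are real precisely because the symmetry of $S$ gives $\chi(s^{-1})=\overline{\chi(s)}$.
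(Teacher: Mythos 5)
Your proof is correct and follows essentially the same route as the paper: both arguments unitarize $\rho$ using the finiteness of $G$ (your averaged inner product is just an explicit proof of the equivalent-unitary-representation fact the paper cites), use the symmetry of $S$ to reindex the sum $\sum_{g\in S}\rho(g^{-1})$ and conclude self-adjointness, and then invoke the spectral theorem, with the paper transferring diagonalizability back through the similarity $M(\rho)=B\,M(\rho')\,B^{-1}$ where you instead absorb that conjugation into the change of inner product. The only substantive addition in your write-up is the observation that the eigenvalues are real, which is correct and consistent with the abelian case.
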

\begin{proof}
We begin by noting that since $G$ is finite, $\rho$ is equivalent to a representation $\rho'$ such that $\rho'(g)$ is unitary for all $g \in G.$ In particular, there exists a $d_\rho \times d_\rho$ matrix $B$ such that
\begin{equation}
M(\rho) = B \, M(\rho') \, B^{-1}.
\end{equation}
Since $\rho'(g)$ is unitary observe that $\rho'(g^{-1}) = \rho'(g)^{-1} = \rho'(g)^*,$ where $*$ once again denotes the adjoint of a matrix. Furthermore, we note that since $S$ is symmetric, if $g \in S$ then $g^{-1} \in S$ so that
\begin{equation}
M(\rho') = \frac{1}{2} \sum_{g \in S} \left[ \rho'(g) + \rho'(g^{-1})\right].
\end{equation}
 It follows immediately that
\begin{equation}
\begin{split}
M(\rho')^* &= \left[\sum_{g \in S} \rho'(g)\right]^*\\
&= \frac{1}{2} \sum_{g \in S} \left[ \rho'(g)+\rho'(g^{-1})\right]^*\\
&= \frac{1}{2} \sum_{g \in S} \left[ \rho'(g) + \rho'(g)^*\right]^*\\
&= M(\rho'),
\end{split}
\end{equation}
and so $M(\rho')$ is Hermitian. Since $M(\rho)$ is similar to $M(\rho')$ it follows that it too is diagonalizable.
\end{proof}

Fixing a maximal set of inequivalent irreducible representations $\hat{G} = \{ \rho_1,\ldots, \rho_L\},$ let $v^i_j$ be the $j$th eigenvector of $M(\rho_i)$ with eigenvalue $\nu^i_j.$ Furthermore, let $H^i_{jk}$ be the $d_{\rho_i} \times d_{\rho_i}$ zero matrix with the $k$th column replaced by $v^i_j.$ We then define the function $f^i_{jk}: \rho \in \hat{G} \rightarrow \mathbb{C}^{d_\rho} \times \mathbb{C}^{d_\rho}$ by
\begin{equation}\label{eq:four_eig}
f^i_{jk}(\rho) = \delta_{\rho_i}(\rho) H^i_{jk}
\end{equation}
where $\delta_{\rho_i}(\rho)$ is $1$ if $\rho = \rho_i$ and zero otherwise. We remark that the function $f$ has only been defined on the set of representations $\hat{G}$ though it has a natural, and unique, extension, $\tilde{f},$ to all representations of $G$ by requiring the following two conditions hold:
\begin{enumerate}[i)]
\item if $\rho$ and $\rho'$ are equivalent representations such that $\rho = B\, \rho'(g)\, B^{-1}$ for all $g \in G$ then $\tilde{f}^i_{jk}(\rho) = B^{-1} f^i_{jk}(\rho') B,$ and
\item if $\rho = \rho_1 \oplus \rho_2$ then $\tilde{f}^i_{jk} (\rho) = f^i_{jk}(\rho_1) \oplus f^{i}_{jk}(\rho_2).$
\end{enumerate}
The following proposition is an immediate consequence of the previous definitions.
\begin{proposition}\label{prp:non_ab_four_eig}
The function $f^i_{jk} (\rho)$ defined in (\ref{eq:four_eig}) is an eigenfunction of the operator $\hat{H}_0$ with corresponding eigenvalue
\begin{equation}\label{eq:non_ab_eigval}
\lambda^{i}_{jk} = |S|+\alpha_0-\nu^{i}_j.  
\end{equation}
for all $i =1,\ldots,L$ and $1 \le j,k \le d_{\rho_i}.$
\end{proposition}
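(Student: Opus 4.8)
The plan is to apply $\hat{H}_0$ directly to $f^i_{jk}$ using the explicit formula (\ref{eq:non_ab_eig}) and verify that the result equals $\lambda^i_{jk}$ times $f^i_{jk}$. Writing $M(\rho) = \sum_{g \in S}\rho(g)$, equation (\ref{eq:non_ab_eig}) gives, at any representation $\rho$,
\begin{equation}
\hat{H}_0[f^i_{jk}](\rho) = (|S|+\alpha_0)\,f^i_{jk}(\rho) - M(\rho)\,f^i_{jk}(\rho).
\end{equation}
Since $f^i_{jk}(\rho) = \delta_{\rho_i}(\rho)\,H^i_{jk}$ is supported on the single representation $\rho_i$, I would split the verification into the trivial case $\rho \neq \rho_i$, where both sides vanish, and the substantive case $\rho = \rho_i$.

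The key step is the matrix computation at $\rho = \rho_i$. The essential observation is that left-multiplication by $M(\rho_i)$ acts on the columns of a matrix independently. Because $H^i_{jk}$ is the zero matrix whose $k$th column is the eigenvector $v^i_j$, the product $M(\rho_i)\,H^i_{jk}$ has only its $k$th column nonzero, equal to $M(\rho_i)\,v^i_j = \nu^i_j\,v^i_j$; hence $M(\rho_i)\,H^i_{jk} = \nu^i_j\,H^i_{jk}$. Substituting back yields
\begin{equation}
\hat{H}_0[f^i_{jk}](\rho_i) = (|S|+\alpha_0)\,H^i_{jk} - \nu^i_j\,H^i_{jk} = \lambda^i_{jk}\,f^i_{jk}(\rho_i),
\end{equation}
and combining with the vanishing case gives $\hat{H}_0[f^i_{jk}] = \lambda^i_{jk}\,f^i_{jk}$ on all of $\hat{G}$, as claimed.

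I would then explain why the indices run over $1 \le j,k \le d_{\rho_i}$: the preceding lemma ensures $M(\rho_i)$ is diagonalizable, so a full set of eigenvectors $\{v^i_j\}$ exists, and since left-multiplication treats each of the $d_{\rho_i}$ columns independently, each placement index $k$ produces a distinct eigenmatrix for the same eigenvalue $\nu^i_j$. The computation itself is mechanical, so I do not expect a serious obstacle. The only point requiring care is consistency with the extension conditions i) and ii) used to define $f^i_{jk}$ on representations outside $\hat{G}$; since both $\hat{H}_0$ and the eigenvalue relation respect conjugation by intertwiners and direct sums, this consistency is automatic and does not disturb the eigenvalue computed on the maximal set $\hat{G}$.
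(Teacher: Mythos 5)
Your verification is correct and is precisely the direct computation the paper has in mind: the paper offers no written proof, stating only that the proposition is ``an immediate consequence of the previous definitions,'' and your columnwise argument $M(\rho_i)\,H^i_{jk} = \nu^i_j\,H^i_{jk}$ together with the trivial vanishing at $\rho \neq \rho_i$ is exactly that immediate consequence spelled out. Your closing remark is also apt, since the proposition concerns $\hat{H}_0$ only on the maximal set $\hat{G}$, so the extension conditions i) and ii) play no role in the eigenvalue computation.
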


Notice that Proposition \ref{prp:non_ab_four_eig} tells us that given the irreducible representations of $G$ we can reduce the problem of finding the eigenfunctions and eigenvalues of the operator $\hat{H}_0$ to that of finding the eigenvectors and eigenvalues of the matrices $\{ M(\rho_i)\}_{i=1}^L.$ To find the corresponding eigenfunctions of the operator $H_0$ we observe that
\begin{equation}
H_0[f](g) = \mathcal{F}^{-1}\left[H_0 \mathcal{F}[f]\right](g)
\end{equation}
from which it follows that $H_0$ has eigenfunctions $u^i_{jk} = \mathcal{F}^{-1} {f}^{i}_{jk}$ with eigenvalues $\lambda^{i}_{jk}.$ Proceeding in this way will generate $\sum_{\rho \in \hat{G}} d_{\rho}^2$ distinct eigenfunctions of $H_0.$ However, we know that \cite{serre}
\begin{equation}
n = |G| = \sum_{\rho \in \hat{G}} d_{\rho}^2,
\end{equation}
and so the above procedure will produce a complete set of eigenfunctions for the operator ${H}_0.$ Since $H_0$ is Hermitian we can use Gram-Schmidt orthogonalization to produce a complete orthonormal set of eigenvectors $\phi^i_{jk},$ with eigenvalues once again given by $\lambda^i_{jk},$ from which it follows that
\begin{equation}\label{irrep_g_func}
G_0 =\sum_{i=1}^L\,\, \sum_{1 \le j,k \le d_{\rho_i}}\frac{1}{\lambda^{i}_{jk}} \phi^i_{jk} {\phi^i_{jk}}^*.
\end{equation}

As an illustration of this procedure we now consider the background Green's function for the permutohedron of order $4,$ shown in Figure \ref{fig_perm}, though the following analysis generalizes to permutohedra of arbitrary order. 

\begin{figure}[h!]
  \centering
    \includegraphics[width=0.5\textwidth]{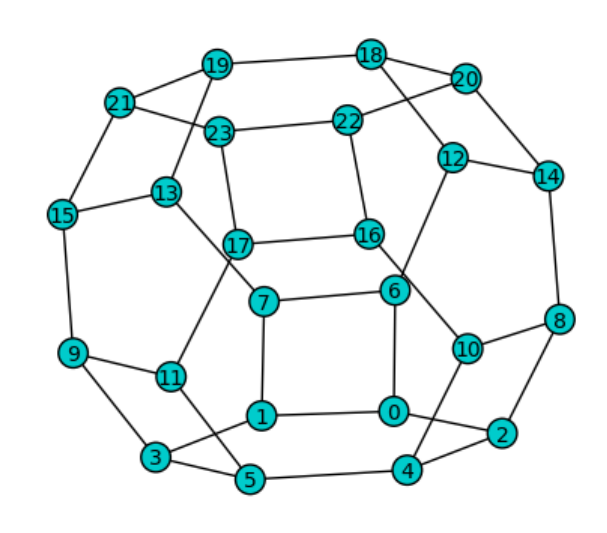}
      \caption{The permutahedron of order $4.$}\label{fig_perm}
\end{figure}

We begin by noting that the permutohedron of order $n$ is isomorphic to the Cayley graph $X(S_n,S),$ where $S_n$ is the symmetric group on $n$ letters and $S$ is the symmetric set of generators consisting of all transpositions which interchange neighbouring elements \cite{angel}. For each irreducible representation $\rho$ of $S_n$ we can construct the matrix $M(\rho),$ given by
\begin{equation}
M(\rho) = \sum_{g\in S}\rho(g).
\end{equation}
Next, for each non-equivalent irreducible representation $\rho,$ we diagonalize the matrix  $M(\rho)$ and form the eigenvectors of $\hat{H}_0$ using (\ref{eq:four_eig}). Taking the inverse Fourier transform of each of these eigenvectors yields eigenvectors of the original operator $H_0,$ with corresponding eigenvalues (\ref{eq:non_ab_eigval}). After normalizing the eigenvectors we construct the background Green's function using (\ref{irrep_g_func}). The matrix given by this procedure, plotted in Figure \ref{fig_gf_permu}, agrees to within machine precision with the inverse of $H_0$ calculated numerically.

\begin{figure}[h!]
  \centering
    \includegraphics[width=0.5\textwidth]{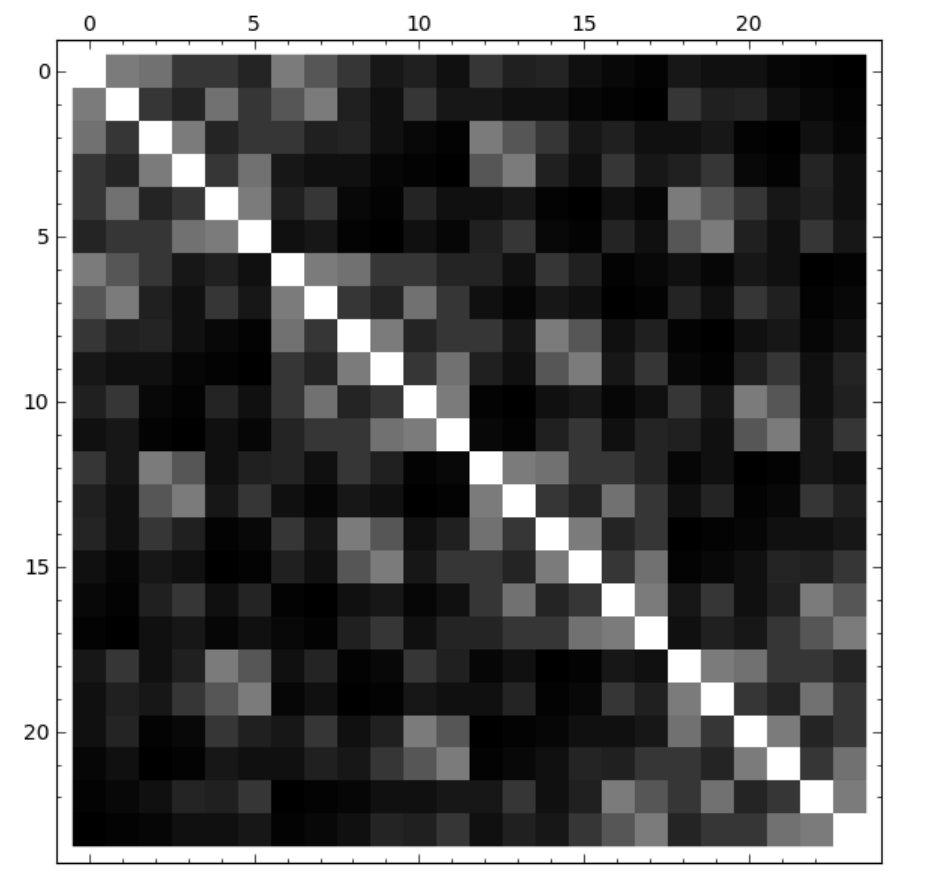}
      \caption{The background Green's function for the permutohedron of order $4,$ with $\alpha_0 = 0.1.$}\label{fig_gf_permu}
\end{figure}
\clearpage

\section{Numerical experiments}\label{sec_num_exp}
In this section we demonstrate the use of the Born series for two illustrative examples to approximate Green's functions when the absorption is a small perturbation of a constant background value $\alpha_0,$ comparing the convergence we observe with the bounds obtained in Section \ref{sec_born}.

\subsection{Inhomogeneous absorption on a path}
Using the background Green's function for the path given in Table \ref{table:background_greens_functions}, we can solve the diffusion equation (\ref{eq:pet_diff}) on a path provided the absorption coefficients $\eta(x)$ are sufficiently small. Let $u_N = B_N \tilde{f}$ where $\tilde{f}$ is the source vector and $B_N$ is the truncated Born series matrix operator. Figure \ref{fig_path_error} gives the numerical results obtained when using the particular $\eta$ given in Figure \ref{fig_path_eta} with a unit source located at the left boundary vertex and $t =1/2$. As predicted the error decays exponentially as $N \rightarrow \infty$ if $\eta_{\rm max}$ is less than a cut-off value, which is approximately $1.15.$ The comparison between the empirically determined cut-off for $\eta_{\rm max}$ and the upper bounds given by Section \ref{sec_con} is summarized in Table \ref{tab_path_cut_off}.

\begin{figure}[h!]
  \centering
    \includegraphics[width=0.5\textwidth]{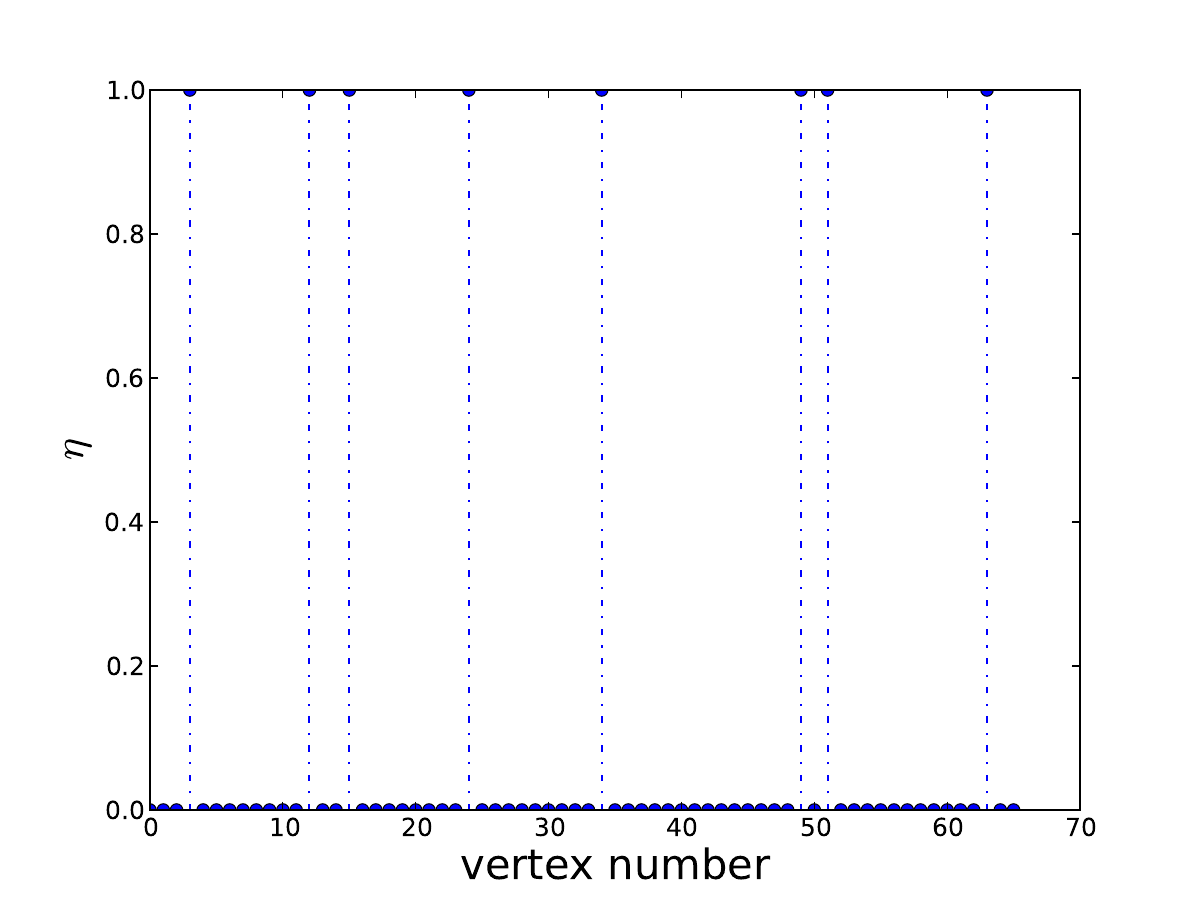}
      \caption{ The absorption vector $\eta$ used for the example of constructing a Green's function for the heterogeneous diffusion equation (\ref{eq:unpet_diff}) via Born series. The support of $\eta$ is chosen to be a random subset of the interior vertices of size $(2n+2)/4.$}\label{fig_path_eta}
\end{figure}
\vspace{0.6 cm}
\begin{figure}[h!]
  \centering
    \includegraphics[width=0.5\textwidth]{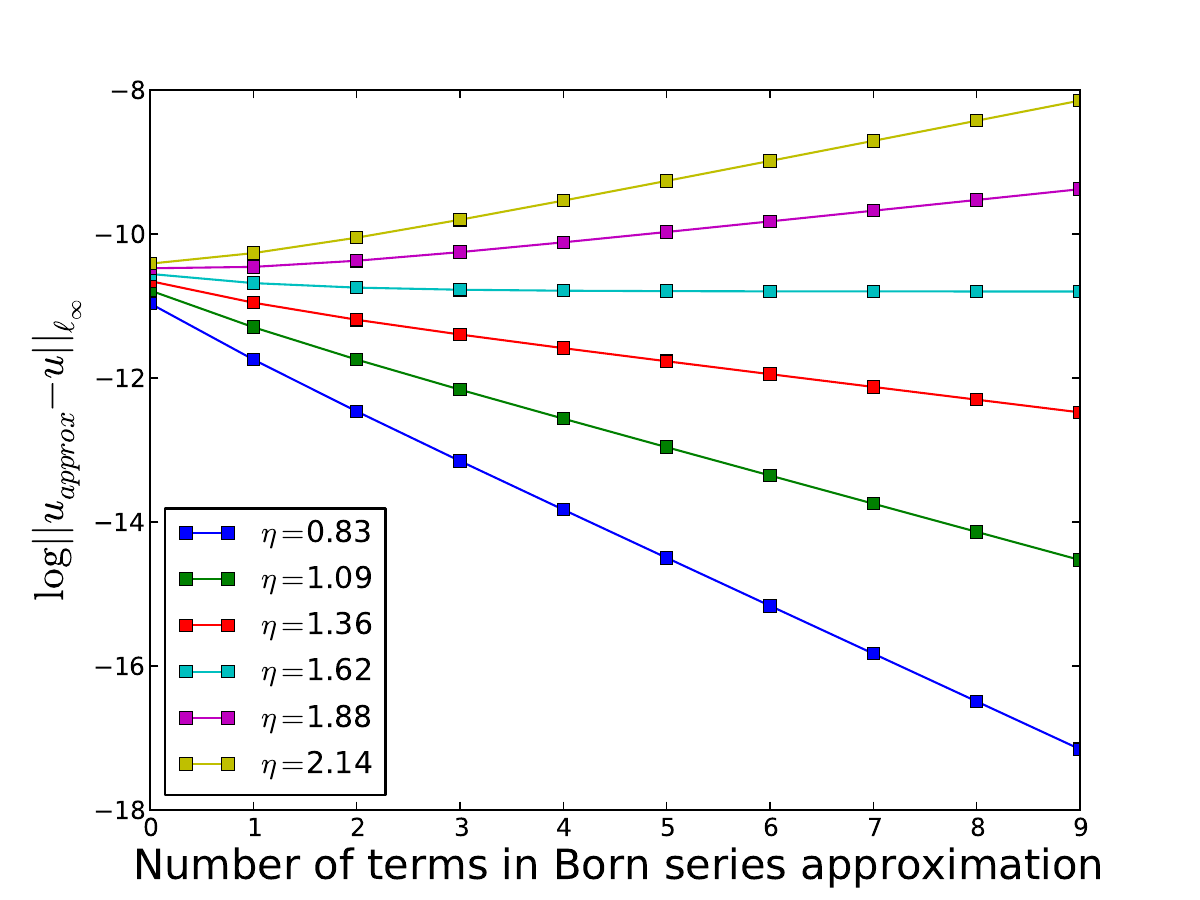}
      \caption{Plots of the $\ell_\infty$ error of the truncated solution $u_N.$ The green and red curves correspond to the bound on $\eta_{\rm max}$ from Theorem \ref{born_prop_1} and Proposition \ref{born_prop_2}, respectively. The other blue lines correspond to $\eta_{\rm max}$ spaced 0.026 apart.}\label{fig_path_error}
\end{figure}
\begin{table}[h!]
\centering
    \begin{tabular}{|l|l|}
    \hline
	{\bf Bound} & {\bf Cut-off} $\eta_{\rm max}$\\
	\hline
	Numerical Experiment & $1.15$\\
	Theorem \ref{born_prop_1} & 0.8333 \\
	Proposition \ref{born_prop_2}& 1.14655\\
    \hline
    \end{tabular}
    \caption{Comparison of theoretical bounds and experimental results for the maximum possible value of $\eta_{\rm max}$ for which the Neumann series converges.}\label{tab_path_cut_off}
 \end{table}

\subsection{Inhomogeneous absorption on a complete graph with boundary}
As a second example, using the background Green's function for the complete graph obtained in Appendix A, and listed in Table \ref{table:background_greens_functions}, we can once again solve the time-independent diffusion problem (\ref{eq:pet_diff}) for sufficiently small perturbations. In particular, choosing $\eta$ to be that given in Figure \ref{fig_comp_eta}, the associated errors for various values of $\eta_{\rm max}$ are given in Figure \ref{fig_comp_ell2}.
\begin{figure}[h!]
  \centering
    \includegraphics[width=0.5\textwidth]{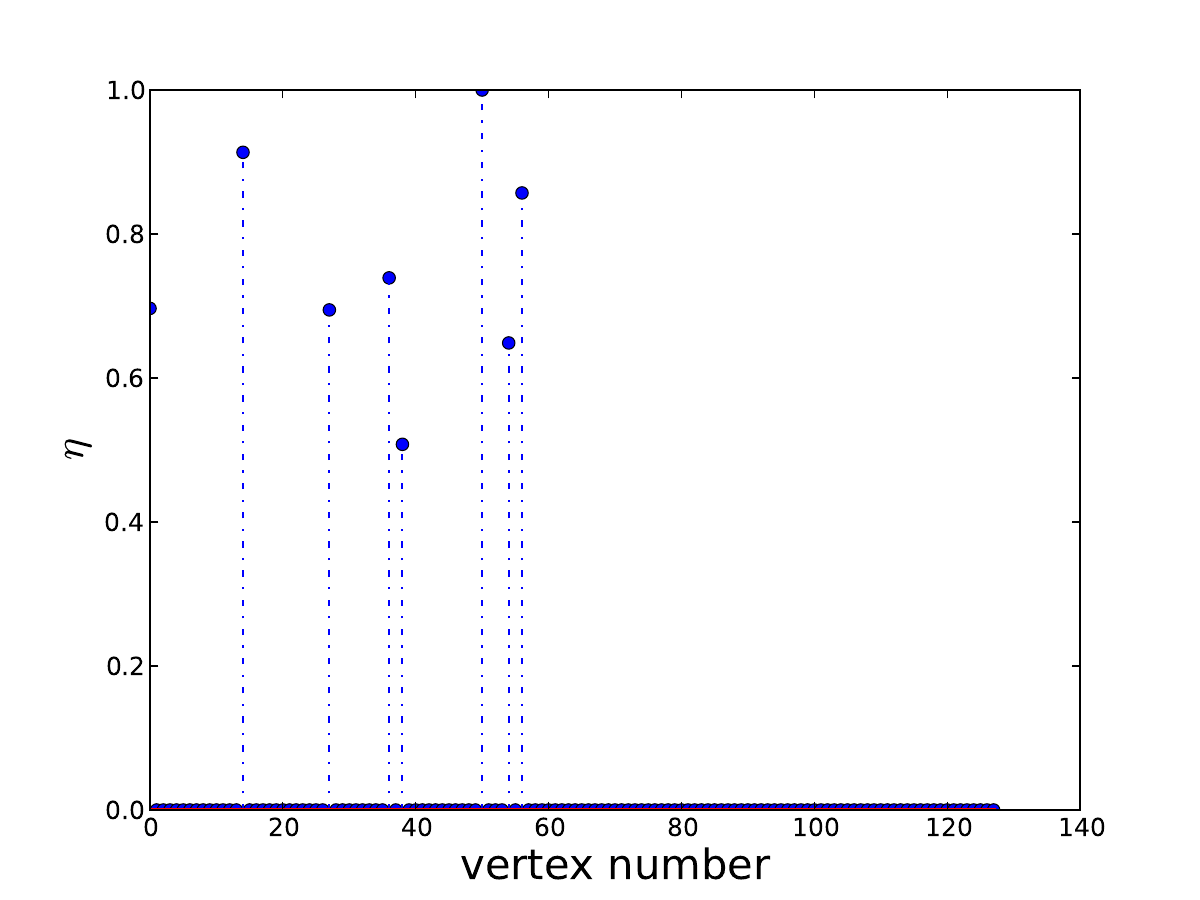}
      \caption{ A typical absorption vector $\eta$ used for the example of constructing a Green's function for the spatially-varying time-independent diffusion equation (\ref{eq:unpet_diff}) via Born series. The support of $\eta$ is once again chosen to be  a random sample of the interior vertices of size $d/4.$}\label{fig_comp_eta}
\end{figure}
\begin{figure}[h!]
  \centering
    \includegraphics[width=0.5\textwidth]{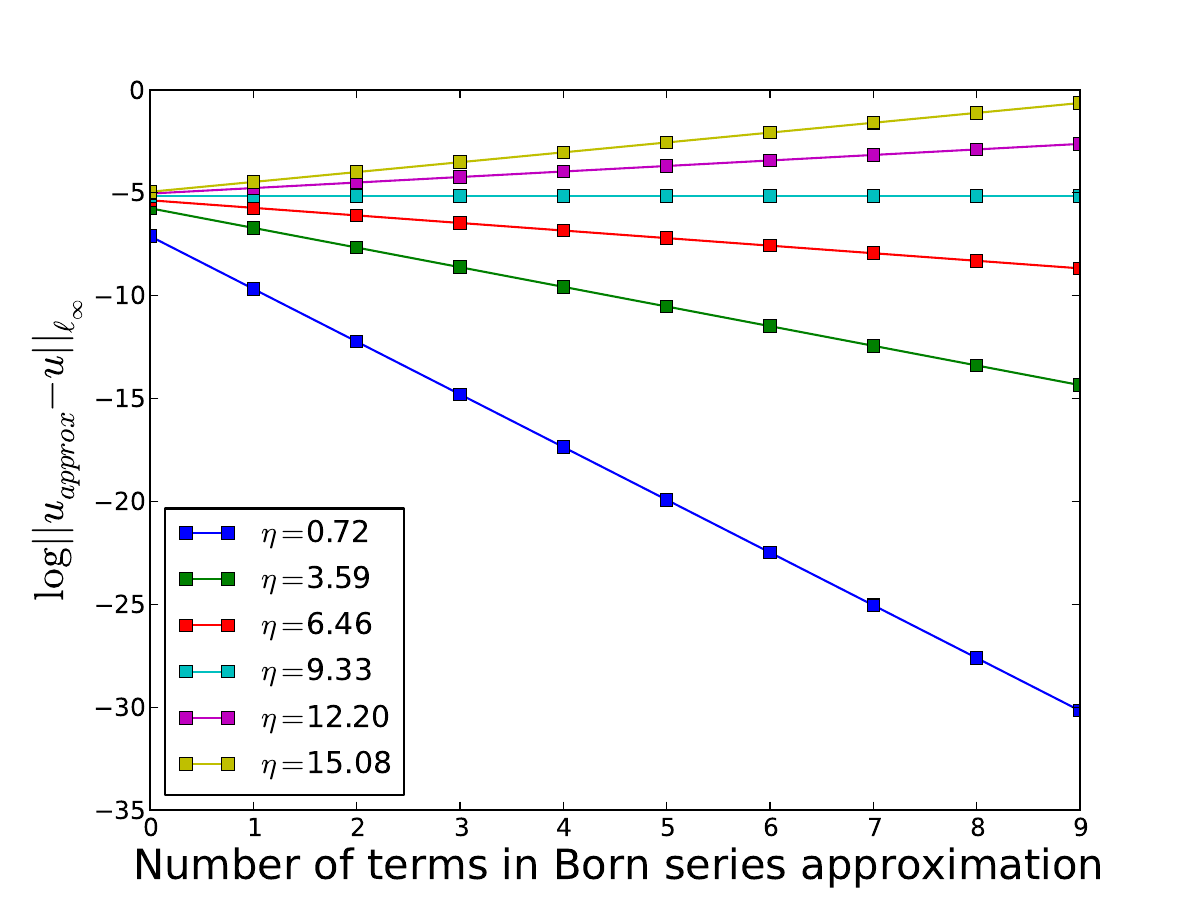}
      \caption{Plots of the $\ell_\infty$ error of the truncated solution $u_N.$ The green and red curves correspond to the bound on $\eta_{\rm max}$ from Theorem \ref{born_prop_1} and Proposition \ref{born_prop_2}, respectively.}\label{fig_comp_ell2}
\end{figure}

\clearpage
\section{Point Absorbers}\label{sec_pt_abs}
In scattering theory a classic problem is to consider a medium which is entirely homogeneous except for a few small inhomogeneities referred to as point absorbers \cite{foldy_lax}. For convenience we typically assume that the inhomogenieties are sufficiently far apart relative to their diameters, so that each can be thought of as being supported on a single point. 

\subsection{A single point absorber}
As above let $\Gamma=(V',E)$ be a graph, let $V \subset V'$ and let $\delta V$ be defined as in (\ref{eq:delta_def}). If a single point absorber is present then $\eta:(V\cup\delta V) \rightarrow \mathbb{R}$ is of the form
\begin{equation}\label{eq:eta_1ptabs}
\eta(x) = \kappa \, \delta_{y}, 
\end{equation}
where $y \in V$ is the location of the point absorber and $\kappa$ is a positive constant. For potentials of this form, we have the following theorem.

\begin{theorem}
Let $G_0$ be the background Green's function for the diffusion equation (\ref{eq:unpet_diff}). If the potential, $\eta,$ is due to a single point absorber located at the vertex labelled by $y,$ then the Green's function, $G,$ for (\ref{eq:pet_diff}) satisfies
\begin{equation}\label{eq:sing_pt_res}
G_0 -G =  \frac{\alpha_0 \kappa}{1+\alpha_0 \kappa \, G_0(y,y)} G_0(\cdot,y) G_0(\cdot,y)^*.
\end{equation}
\end{theorem}

\begin{proof}
Let $G$ denote the Green's function for the diffusion equation (\ref{eq:pet_diff}). By definition, we see that
\begin{equation}
G = H^{-1} = [H_0 + \alpha_0 \kappa e_y e_y^T]^{-1},
\end{equation}
and hence that $H$ is a rank one perturbation of the operator $H_0.$ The Sherman-Morrison formula, see \cite{disc_op} for example, then yields
\begin{equation}
(G_0 -G)(i,j) =\frac{\alpha_0 G_0^*(j,y)G_0(i,y)}{1+\alpha_0 \kappa G_0(y,y)},
\end{equation}
which completes the proof.
\end{proof}

For example, consider the infinite path whose background Green's function can be obtained by taking the limit as $n$ goes to infinity of the Green's function for the finite path given in Table \ref{table:background_greens_functions}. If the point absorber is located at the vertex $k$ then equation (\ref{eq:sing_pt_res}) yields
\begin{equation}\label{eq:pt_scat_1}
(G_0 -G)(i,j) = \frac{\alpha_0 \kappa}{1+\frac{\alpha_0 \kappa}{r-r^{-1}}} e^{-\log(r)\, \left(|i-k|+|j-k|\right)}.
\end{equation}
Note that unlike the continuous case \cite{foldy_lax}, no renormalization is required to obtain equation (\ref{eq:pt_scat_1}). This is an immediate consequence of the fact that in the discrete setting the operator $G_0$ is bounded for all $i$ and $j,$ whereas for the continuous problem $G_0$ is a singular integral operator. Physically, renormalization corresponds to giving each point absorber a non-zero size which we assume is small relative to the wavelength of the incident field. For the infinite path, since the system is discrete, the point absorbers automatically have a non-zero width and so no additional length scales need to be introduced.

As a second example, we consider the complete graph with boundary. For convenience we assume that the absorber is located in the interior of the graph and that the source and detector are located on the boundary of $V.$ Using the Green's function given in equation (\ref{eq:comp_green_fcn}) with the formula (\ref{eq:sing_pt_res}) we obtain
\begin{equation}
(G_0-G)(i,j) =\frac{\alpha_0 \kappa}{1+\frac{\alpha_0 \kappa \sigma}{(\sigma-1)(\sigma-1+d)} } \frac{\gamma^2 \sigma^2}{(\sigma-1)^2(\sigma-1+d)^2},  
\end{equation}
where $i,j \in \delta V,$ $\gamma = 1/(1+t)$ and $\sigma = 2+ \alpha_0 - \gamma.$ 

\subsection{Multiple point absorbers}
We now consider the case where there are $m$ identical point scatterers.
\begin{theorem}
Let $G_0$ be the background Green's function for the diffusion equation (\ref{eq:unpet_diff}). Further suppose that the potential, $\eta,$ consists of $m$ identical point absorbers of strength $\kappa,$ located at the vertices $\Lambda =\{x_{k_1},\ldots, x_{k_m}\} \subset V.$ Let $I_\Lambda$ be the $ (|V|+|\delta V|)\times m$ submatrix of the identity obtained by taking the columns of $I$ indexed by $\Lambda.$ The Green's function, $G,$ for (\ref{eq:pet_diff}) satisfies 
\begin{equation}\label{eq:mult_pt_res}
G_0 -G =  {\alpha_0 \kappa}G_0 I_\Lambda [I+\alpha_0 \kappa I_\Lambda^T G_{0}I_\Lambda]^{-1} I_\Lambda^TG_0.
\end{equation}
\end{theorem}

\begin{proof}
We begin by observing that by definition, $G$ satisfies
\begin{equation}
H_0 G =I  -\alpha_0 D_\eta  G.
\end{equation}
Using our definition of $I_\Lambda,$ we can rewrite this as
\begin{equation}
H_0 G = I - \alpha_0 \kappa I_\Lambda I_\Lambda^T G.
\end{equation}
Similarly, we observe that
\begin{equation}\label{eq:useful_eq}
G = G_0 - \alpha_0 \kappa G I_\Lambda I_\Lambda^T G_0,
\end{equation}
from which it follows that
\begin{equation}
H_0 G = I - \alpha_0\kappa I_\Lambda I_\Lambda^T [G_0 - \alpha _0 \kappa G I_\Lambda I_\Lambda^T G_0 ],
\end{equation}
where we have used the fact that
\begin{equation}
H_0 G_0= I.
\end{equation}
Thus
\begin{equation}
(G_0-G)=\alpha_0 \kappa \left[ G_0 (I_\Lambda I_\Lambda^T) G_0 - \alpha_0 \kappa G_0 I_\Lambda (I_\Lambda^T G I_\Lambda) I_\Lambda^T G_0\right],
\end{equation}
and hence
\begin{equation}
G_0 -G = \alpha_0 \kappa (G_0 I_\Lambda) \left[ I_\Lambda^T I_\Lambda - \alpha_0 \kappa (I_\Lambda^T G I_\Lambda) \right] (I_\Lambda^T G_0).
\end{equation}
To find $I_\Lambda^T G I_\Lambda$ we left muliply (\ref{eq:useful_eq}) by $I_\Lambda^T$ and right multiply by $I_\Lambda$ to obtain
\begin{equation}
G' = G_0' -\alpha_0 \kappa G' G_0',
\end{equation}
where $G_0' = I_\Lambda^T G_0 I_\Lambda$ and $G' = I_\Lambda^T G I_\Lambda.$ It follows that
\begin{equation}\label{eq:mult_fl_scat_m}
G' =G_0' [I_\Lambda^T I_\Lambda + \alpha_0 \kappa G_0']^{-1}. 
\end{equation}
Note that the inverse of the matrix in  (\ref{eq:mult_fl_scat_m}) exists since the smallest eigenvalue  of $G_0'$ must be greater than the smallest eigenvalue of $G_0,$ which is positive definite. Since $I_\Lambda^T I_\Lambda$ is the $m \times m$ identity matrix it follows that $I_\Lambda^T I_\Lambda + \alpha_0 \kappa G_0'$ has positive eigenvalues and is therefore invertible.

 Finally, observe that 
\begin{equation}
I_\Lambda^T I_\Lambda  - \alpha_0 \kappa G_0' [I_\Lambda^T I_\Lambda + \alpha_0 \kappa G_0']^{-1} =[I_\Lambda^T I_\Lambda + \alpha_0 \kappa G_0']^{-1},
\end{equation}
from which it follows that
\begin{equation}
G_0 -G =\alpha_0 \kappa (G_0 I_\Lambda) \left[ I_\Lambda^T I_\Lambda + \alpha_0 \kappa I_\Lambda^TG_0 I_\Lambda \right]^{-1} (I_\Lambda^T G_0),
\end{equation}
which completes the proof. Alternatively we could iterate the Sherman-Morrison formula, or follow the approach of \cite{disc_op}.
\end{proof}

As an example, we once again consider the infinite path and suppose there are two point absorbers located at the vertices corresponding to $k_1$ and $k_2.$ Here
\begin{equation}
G_0'  = \frac{1}{2 \sinh \log r} \left(\begin{array}{cc}
1& e^{-\log(r) |k_2-k_1|}\\
e^{-\log(r)|k_2 -k_1|} & 1
\end{array}\right).
\end{equation}
If $f_{i,j} = e^{-\log(r) |i-j|}/{2 \sinh\log{r}}$ and $s = \sinh(\log(r))$ a straightforward calculation yields
\begin{equation}\label{eq:2_pt_abs_final}
\begin{split}
e_j^*(G_0-G)e_i = &\frac{1}{2}\left( \begin{array}{cc} f_{j,k_1}+f_{j,k_2} &f_{j,k_1}-f_{j,k_2} \end{array} \right)
\left(\begin{array}{cc}\frac{\alpha_0\kappa}{1+\alpha_0\kappa(s^{-1}+f_{k_1,k_2})}&0\\0&\frac{\alpha_0\kappa}{1+\alpha_0\kappa(s^{-1}-f_{k_1,k_2})}  \end{array} \right)\\
& \times \left( \begin{array}{cc} f_{i,k_1}+f_{i,k_2} \\ f_{i,k_1}-f_{i,k_2} \end{array} \right).
 \end{split}
\end{equation}
Observe that in the limit as $|k_1 -k_2| \rightarrow \infty,$ $f_{k_1,k_2} = o(1)$ and hence equation (\ref{eq:2_pt_abs_final}) becomes
\begin{equation}
\begin{split}
e_j^*(G_0-G)e_i \approx &\frac{\alpha_0 \kappa}{2(1+\alpha_0 \kappa s^{-1})}\left( \begin{array}{cc} f_{j,k_1}+f_{j,k_2} &f_{j,k_1}-f_{j,k_2} \end{array} \right) \left( \begin{array}{cc} f_{i,k_1}+f_{i,k_2} \\ f_{i,k_1}-f_{i,k_2} \end{array} \right)\\
=&\frac{\alpha_0 \kappa}{2(1+\alpha_0 \kappa s^{-1})}\left[f_{i, k_1} f_{j,k_1}+f_{i,k_2}f_{j,k_2}\right]\\
=& G_1(i,j; k_1)+G_1(i,j;k_2)
\end{split}
\end{equation}
where $G_1(i,j;_k)$ is the Green's function for one point absorber located at the point $k.$ Thus as the separation of the two point absorbers increases, the Green's function tends toward the sum of the Green's functions for two non-interacting point absorbers. Sample plots are shown in Figure \ref{fig_examp_pt_abs} for the infinite path with two point absorbers equidistant from a point source located at the origin. Here $u_0$ represents the solution to the homogeneous problem and $u$ denotes the solution to the full time-independent diffusion equation.
\vspace{0.6 cm}

\begin{figure}[h!]
  \centering
    \includegraphics[width=0.5\textwidth]{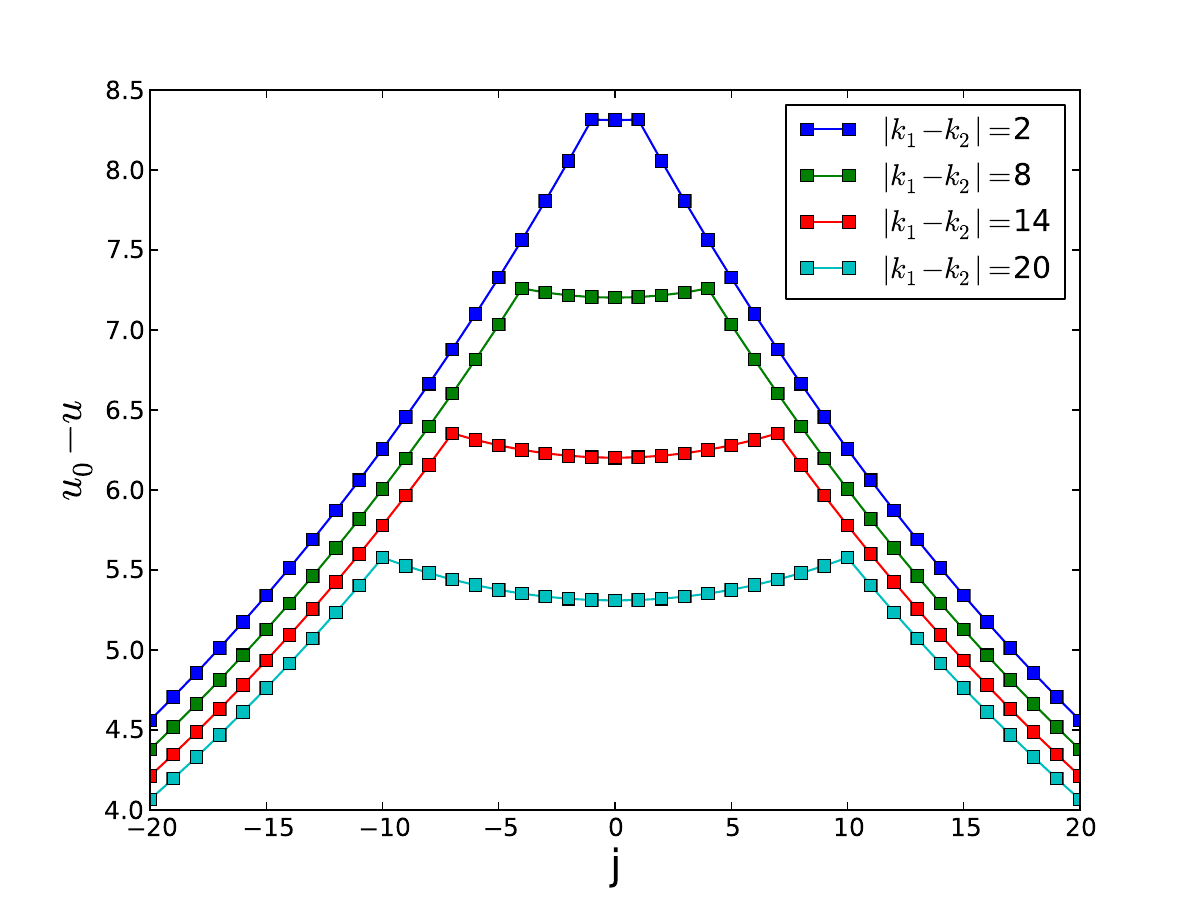}
      \caption{Plots of $u_0-u$ for the infinite path with two identical point scatterers equidistant from a point source located at the origin with $\alpha_0 = 0.001$ and $\kappa = 100.$}\label{fig_examp_pt_abs}
\end{figure}
\clearpage

As a second example we consider the scattering from two point absorbers on the infinite two-dimensional lattice. An analysis of the scattering properties of this system requires an expression for the Green's function of the diffusion equation (\ref{eq:unpet_diff}) on $\mathbb{Z}\times \mathbb{Z},$ an integral formula for which was found in Proposition \ref{2d_latt} of Appendix A. For simplicity we specialize to the case in which the are two point absorbers are positioned on the y-axis and the source and detector are located on the x-axis as in Figure \ref{fig_lat_di}. 
\begin{figure}[h!]
\centering
\includegraphics[width = 0.5\textwidth]{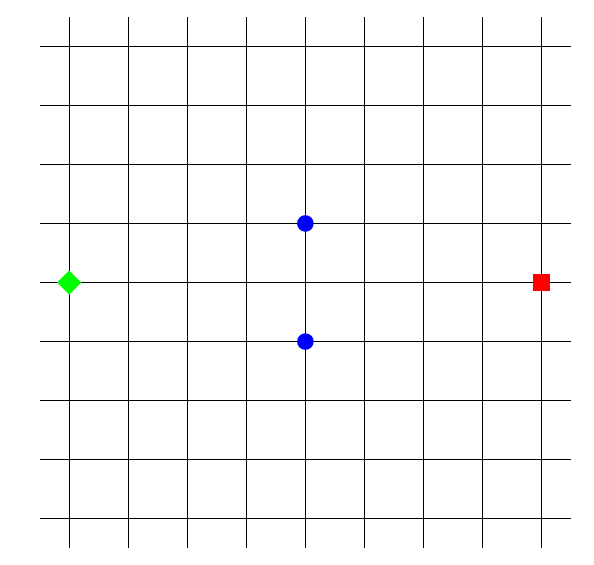}
   \caption{A diagrammatic representation of the geometry of the point absorbers (blue circles), source (green diamond) and detector (red square) used to study the scattering properties of the two point absorber system on the infinite square lattice.}\label{fig_lat_di}
   \end{figure}
   
If we assume the source is located at $(s,0)$ the detector is at $(j,0),$ and the point absorbers are located at $(0, k_1),$ and $(0,k_2),$ then an analogous calculation to the one performed for the one-dimensional case yields
\begin{equation}\label{eq:2_pt_abs_2d_final}
\begin{split}
e_j^*(G_0-G)e_s = &\frac{1}{2}\left( \begin{array}{cc} f_{j,k_1}+f_{j,k_2} &f_{j,k_1}-f_{j,k_2} \end{array} \right)
\left(\begin{array}{cc}\frac{\alpha_0\kappa}{1+\alpha_0\kappa \lambda_+}&0\\0&\frac{\alpha_0\kappa}{1+\alpha_0\kappa \lambda_-}  \end{array} \right)\\
& \times \left( \begin{array}{cc} f_{s,k_1}+f_{s,k_2} \\ f_{s,k_1}-f_{s,k_2} \end{array} \right).
 \end{split}
\end{equation}
where 
\begin{equation}\label{eq:lam_pm_2pt}
\begin{split}
\lambda_{\pm} &= g(0,0) \pm g(0,k_2-k_1) \\
&= \frac{1}{\pi a} K\left( \frac{1}{a^2}\right) \pm  \frac{1}{2\pi} \int_{0}^\pi \frac{\cos[(k_2-k_1)\,v]\, (\cos{v})^{|k_2-k_1|}}{(a+\sqrt{a^2-\cos^2 v})^{|k_2-k_1|} \sqrt{a^2-\cos^2{v}}}\, {\rm d}v
\end{split}
\end{equation}
and $f_{s,k} = g(|s|,|k|).$ Results for $-s = j = 1$ and $k_1= -k_2$ are shown in Figure \ref{fig:lat_res} for various values of  the point absorber separation $|k_2-k_1|$ with $\alpha_0 = 10^{-3}$ and $\kappa =  10^3.$ Note that due to the nature of our expression for the isotropic Green's function we cannot evaluate equation (\ref{eq:2_pt_abs_2d_final}) exactly and must make use of numerical integration both to evaluate $g(|s|,|k|)$ and to find values for the integrals in (\ref{eq:lam_pm_2pt}).
\vspace{0.5 cm}

\begin{figure}[h!]
  \centering
    \includegraphics[width=0.5\textwidth]{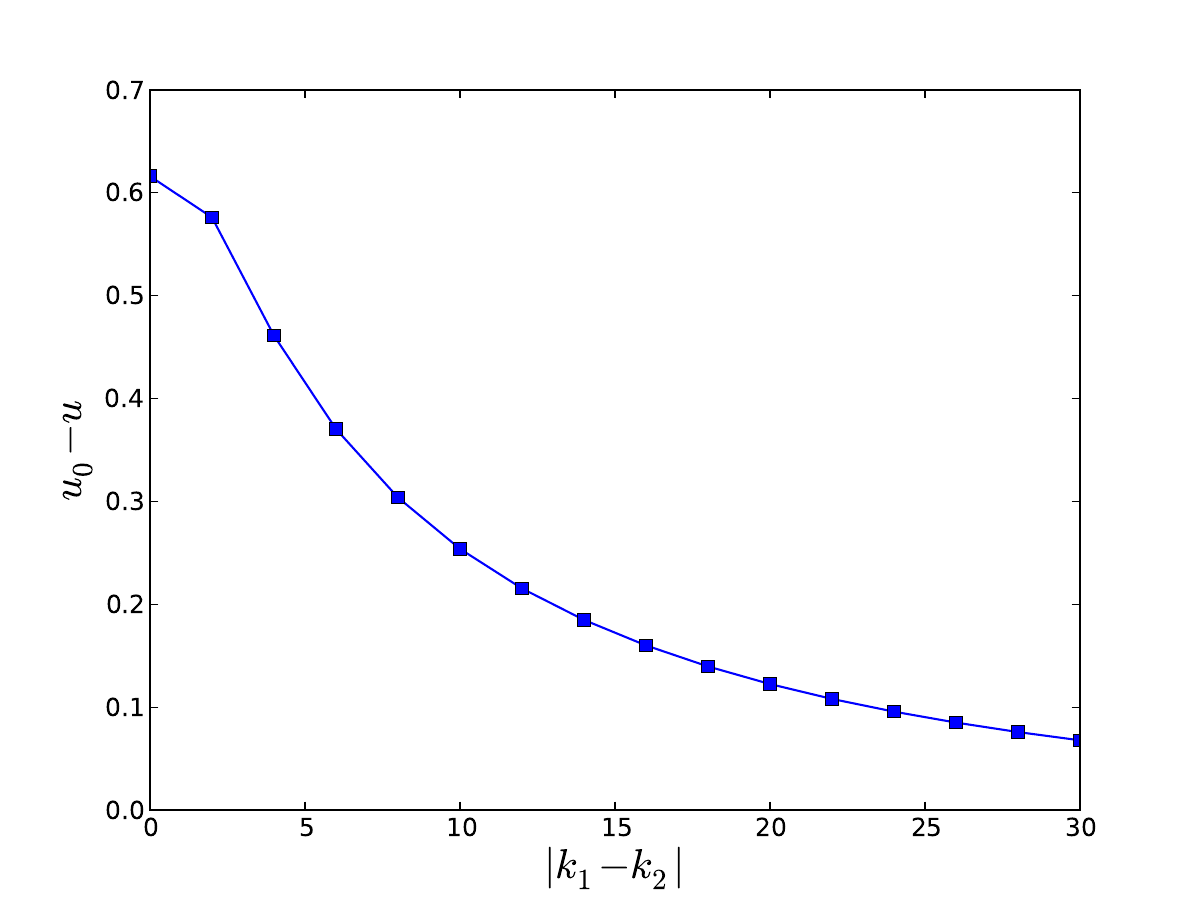}
      \caption{Plots of $u_0-u$ for the infinite plane with two identical point scatterers equidistant from a point source located at $(-1,0)$ and detector located at $(1,0)$ with $\alpha_0 = 10^{-3}$ and $\kappa = 10^{3}.$}\label{fig:lat_res}
\end{figure}

\section{Acknowledgements}
This work was supported in part by the NSF grants DMSÐ1115574 and DMSÐ1108969 to JCS and NSF CCF 1161233 and NSF CIF 0910765 to ACG.

\bibliography{bib_file}{}
\bibliographystyle{hsiam}

\begin{appendices}
\section{Computation of background Green's functions}
\subsection{Analysis of a M\"{o}bius ladder}
Another family of vertex-transitive graphs of particular interest in material science \cite{proc_knot_92,Mob_Lad_Paper,nat_mob_lad} and computer science \cite{lin_ord_poly_mob} are the M\"{o}bius ladders on $2n+2$ vertices. Using a similar approach as above we can compute the background Green's function  for the diffusion equation on this family of graphs. For convenience we assume $n$ is odd though a similar result can be obtained in the even case by a slight modification to the proof of the following theorem.

\begin{theorem}
\label{prop_mob}
Consider the M\"{o}bius ladder with $2n+2$ vertices $\{0, 1 ,\ldots, 2n+1\},$ $n$ odd, as shown in Figure \ref{fig_mob}. The associated Green's function for the diffusion equation with uniform absorption (\ref{eq:unpet_diff}) is
\begin{equation}
G(i,j) = \left\{ \begin{array}{lr} g_1(|i-j|_{\rm min}) + g_2(|i-j|_{\rm min}), \quad & |i-j|\le \frac{n+1}{2}+1  \\
 g_1(|i-j|_{\rm min})-g_2(|i-j|_{\rm min}), \quad & |i-j| > \frac{n+1}{2}\end{array} \right.
\end{equation}
for all $0 \le i,j \le 2n+1,$ where $|i-j|_{\rm min} = \min\{ |i-j|, \, 2n+2-|i-j|,\, \left|\,|i-j| - (n+1) \right| \},$
\begin{equation}
g_k(s) = \frac{(a_k r_k^{(n+1)/2}-a_k^{-1} r_k^{-(n+1)/2})(a_k r_k^{(n+1)/2-s}-a_k^{-1} r_k^{-[(n+1)/2-s]}) }{(r_k-r_k^{-1}) (a_k^2 r_k^{n+1}-a_k^{-2} r_k^{-(n+1)})}, \quad k=1,2,
\end{equation} 
$r_k$ satisfies $r_k+1/r_k = 2k+\alpha_0,$
\begin{equation}
a_1 = \left[1+\frac{r^2_1-1}{r_1(1+\frac{\alpha_0}{2}-r_1)} \right]^\frac{1}{2},
\end{equation}
and $a_2 = 1.$
\end{theorem}

\begin{figure}[!!h]
\footnotesize
\centering
\includegraphics{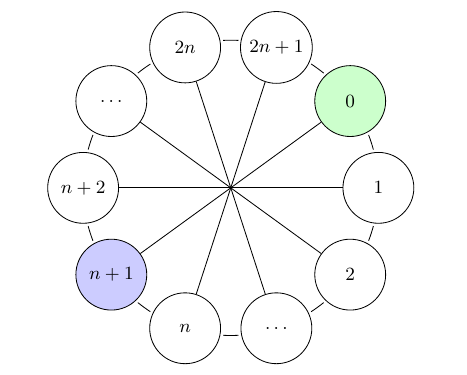}
\caption{The M\"{o}bius ladder with $2n+2$ vertices. }\label{fig_mob}
\end{figure}

\begin{proof}
The result is proved in a manner similar to the method of images used in PDEs. We decompose the Green's function into two functions one of which is symmetric and the other antisymmetric with respect to reflection through the origin. Unlike in the case of method of images, however, the `mirror charges' are located within the domain of interest and we rely on cancellations to recover the desired solution. In considering the symmetries of the problem, it is clear that $G(i,j)$ must depend only on the number of vertices of the loop lying between the two vertices $x_i$ and $x_j,$ since the graph $\Sigma$ is invariant under cyclic permutations of its vertices. Thus, without loss of generality, we can fix $j = 0.$ Letting $H_0$ be the operator associated with the homogeneous time-independent diffusion equation (\ref{eq:unpet_diff}) we now consider two related problems:
\begin{enumerate}[(A)]
\item find the vector $g_2 \in \mathbb{R}^{2n+2}$ such that $H_0 \,g_2 = \frac{1}{2}(e_0 - e_{n+1}),$ where $e_k$ is the $k$th canonical basis vector
\item find the vector $g_1 \in \mathbb{R}^{2n+2}$ such that $H_0 \,g_1 = \frac{1}{2}(e_0 + e_{n+1}).$ 
\end{enumerate}

In considering subproblem (A), we see by inspection that if $g_2$ is a solution then it must satisfy
\begin{equation}
g_2(i) = g_2(2n+2-i) = - g_2(n+1-i) = -g_2(n+1+i),
\end{equation}
where once again for ease of notation we take all indices modulo $2n+2.$ It follows immediately that
\begin{equation}
g_2((n+1)/2) = g_2(-(n+1)/2) = 0.
\end{equation}
Moreover, applying $H_0$ we see that
\begin{equation}
\begin{split}
\frac{1}{2}\delta_{0,i} &= (3+ \alpha_0) g_2(i) -g_2(i+n+1)-g_2(i+1)-g_2(i-1),\\
&= (4+\alpha_0) g_2(i) - g_2(i+1) - g_2(i-1),
\end{split}
\end{equation}
for $-(n+1)/2 < i < (n+1)/2.$ Hence $2 g_2(i),$ $-(n+1)/2< i < (n+1)/2$ satisfies the same equation as the Green's function for the centered path with source at $j=0,$ $\alpha_0$ replaced by $2+ \alpha_0$ and with Dirichlet boundary conditions at $i = \pm(n+1)/2.$ A slight modification to the first example of Table \ref{table:background_greens_functions} then yields
\begin{equation}
g_2(i) = \frac{(r^{(n+1)/2}-r^{-(n+1)/2})(r^{(n+1)/2-i}-r^{-(n+1)/2+i})}{2(r-r^{-1})(r^{n+1}-r^{-(n+1)})}
\end{equation}
where $r + r^{-1} = 4 + \alpha_0.$

To solve subproblem (B), we begin by noting that
\begin{equation}
g_1(i) = g_1(2n+2-i)=g_1(n+1-i) = g_1(n+1+i),
\end{equation}
for all $i = 0,\ldots, 2n+1$ and where all indices are taken modulo 2n+2. It follows that it is sufficient to find $g_1(i)$ for $ i =-(n+1)/2,\ldots,-2,-1,0,1,2,\ldots,(n+1)/2.$ By symmetry we know that $g_1((n+1)/2-1) = g_1((n+1)/2+1)$ and $g_1(-(n+1)/2) = g_1((n+1)/2)$ so that
\begin{equation}
\begin{split}
0 &=(3+ \alpha_0) g_1((n+1)/2) - g_1(-(n+1)/2)-g_1((n+1)/2-1)-g_1((n+1)/2+1) \\
&=(2+ \alpha_0) g_1((n+1)/2)-2 g_1((n+1)/2-1).
\end{split}
\end{equation}
Similar reasoning applies to $g_1(-(n+1)/2)$ and hence
\begin{equation}
(1+\frac{\alpha_0}{2}) g_1(\frac{n+1}{2}) - g_1(\frac{n+1}{2}-1) = 0, \,\, {\rm and} \,\, (1+\frac{\alpha_0}{2}) g_1(-\frac{n+1}{2}) - g_1(-\frac{n+1}{2}+1) = 0.
\end{equation}
For all $i,$ $-(n+1)/2<i<(n+1)/2$ we find from the above symmetries that
\begin{equation}
(2+\alpha_0) g_1(i) -g_1(i-1)-g_1(i+1) = \frac{1}{2} \delta_{i,0}.
\end{equation}
It follows immediately that the equations satisfied by $2 g_1(i),$ $i = -(n+1)/2,\ldots,(n+1)/2$ are identical to those defining the Green's function for the centered path with source at $j=0$ and with Robin boundary conditions $t = \alpha_0/2.$ Once again using a slight modification of the first line of Table \ref{table:background_greens_functions}, we find that if $r + 1/r = 2 + \alpha_0,$ $t = \alpha_0/2$ and $a = \left[1 + \frac{r^2-1}{r(1+t-r)}\right]^\frac{1}{2}$ then
\begin{equation}
g_1(i) = \frac{(a r^{(n+1)/2}-a^{-1} r^{-(n+1)/2})(a r^{(n+1)/2-i}-a^{-1} r^{-(n+1)/2+i})}{2(r-r^{-1}) (a^2 r^{n+1} - a^{-2} r^{-(n+1)})}.
\end{equation}
The remaining entries can then be found immediately by reflection.

\end{proof}

\clearpage

\subsection{Analysis of the complete graph on $d$ vertices}
We can perform a similar computation with the complete graph on $d$ vertices, an example of which is shown in Figure \ref{fig_complete} for $d=10.$ 

\begin{proposition}
\label{prop_complete_gr}
Let $R$ be the complete graph on $d$ vertices $\{0, 1 ,\ldots, d-1\}$  with $d$ boundary vertices $\{0',\ldots, (d-1)'\}.$ The associated Green's function for the diffusion equation (\ref{eq:unpet_diff}) with Robin boundary conditions is
\begin{equation}\label{eq:comp_green_fcn}
   G(x,y) = \left\{
     \begin{array}{lr}
       \frac{\sigma}{(\sigma-1)(\sigma-1+d)} &{\rm if}\,\, x =y \in V,\\
       
       \frac{1}{(\sigma-1)(\sigma-1+d)} &{\rm if}\,\, x \neq y,\, x,y \in V,\\
       \frac{\gamma^2 \sigma}{(\sigma-1)(\sigma-1+d)}+\gamma &{\rm if}\,\, x = y\in \delta V,\\
     \end{array}
   \right.
\end{equation} 
where $\gamma = 1/(1+t)$ and $\sigma = 2+ \alpha_0 - \gamma.$ The remaining entries can be obtained via the Robin boundary conditions and the identity 
\begin{equation}
G(x,y) = G(y,x)
\end{equation}
which holds for all $x,y \in V \cup \delta V.$
\end{proposition}

\begin{figure}[!!h]
\centering
\includegraphics{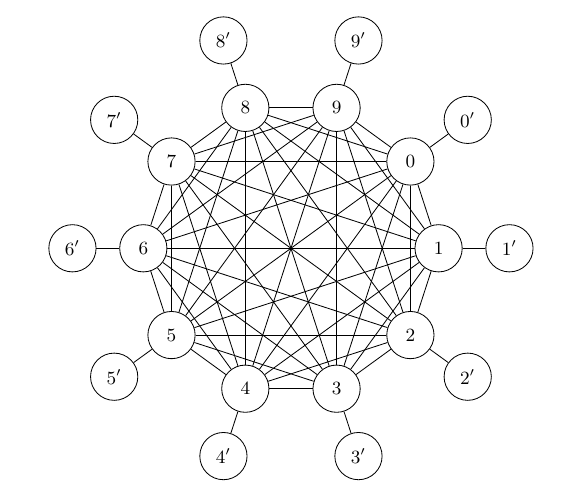}
\caption{The complete graph on $10$ vertices with $10$ boundary vertices. }\label{fig_complete}
\end{figure}

\begin{proof}
We first consider the case where the source is located at an interior vertex. By symmetry we may assume without loss of generality that $y = 0.$ Note that upon fixing $y=0$ the graph is invariant under a permutation of all remaining vertices, provided it preserves the edge between the vertex in $R$ and the corresponding boundary point.




If $x \neq 0 $ we observe that

\begin{equation}
\left[ (d +\alpha_0) G({x},0) - (d-2) G({x},0) - G(0,0) - G({x}',0)\right] = 0 .
\end{equation}
Using Robin boundary conditions we see that $G(x',0) = G({x},0)/(1+t)$ and hence
\begin{equation}
G(0,0) = \left[ 2+\alpha_0-\gamma \right]\, G(x,0).
\end{equation}
Let $g = G(x,0)$ and $\sigma = \left[ 2+\alpha_0-\gamma \right]$ in which case we obtain
\begin{equation}
\begin{split}
1&=  \left[ (d+ \alpha_0) G({0},0) - (d-1) g - \gamma G(0,0)\right] \\
&= \left[( d+\alpha_0) \sigma \, g - (d-1) g - \gamma \sigma g \right]\\
&= {g} (\sigma-1) (\sigma+d-1)
 \end{split}
 \end{equation}
 and thus
\begin{equation}
g = \frac{1}{(\sigma-1)(\sigma+d-1)}.
\end{equation}
 The remainder of the result follows immediately from noting that $G(0,0) = \sigma g$ and by using the symmetries of the complete graph described above.

 Now suppose the source is located on the boundary. Again, without loss of generality, we may assume that the source is located at the vertex $0'.$ If $x \neq 0$ then
 \begin{equation}
 0= [(d+\alpha_0)\,G(x,0') - (d-2)G(x,0')-G(0,0')-G(x',0)].
 \end{equation}
If $G(x,0') = g$ then $G(0,0') = (2+\alpha_0-\gamma)g=\sigma g.$ If $x= 0$ then
\begin{equation}
0 = [(d+\alpha_0) \sigma g -(d-1) g - G(0',0')].
\end{equation}
The Robin boundary condition $t G(0',0') +[G(0',0')-G(0,0')] =1$ implies that
\begin{equation}
g = \gamma \frac{1}{(d+\alpha_0)\sigma - (d-1) -\sigma \gamma}= \frac{\gamma}{(\sigma-1)(\sigma+d-1)}.
\end{equation} 
 \end{proof}
 \clearpage

\subsection{Analysis of a two-dimensional lattice}

We conclude our catalogue of examples with a discussion of the Green's function for the two-dimensional lattice $\Sigma = \mathbb{Z} \times \mathbb{Z}.$ For convenience, we index the vertices with ordered tuples $V = \{(m,n)\, |\, m,n \in \mathbb{Z}\}$ and hence if $x = (m_1,n_1)$ and $y= (m_2,n_2)$ are two vertices then $x \sim y$ if and only if $|m_2-m_1|+|n_2-n_1| =1.$ In the following proposition we obtain an integral representation of the Green's function for the isotropic time-independent diffusion equation on the infinite two-dimensional lattice by means of a discrete Fourier transform.

\begin{proposition}
\label{2d_latt}
Let $\Gamma$ be the graph $\mathbb{Z} \times \mathbb{Z}$ with vertices labelled by $\{(m,n)\, |\, m,n \in \mathbb{Z}\}.$ The Green's function for the corresponding homogeneous time-independent diffusion equation (\ref{eq:unpet_diff}) is
\begin{equation}\label{eq:lat_2}
G((m_1,n_1),(m_2,n_2))= \frac{1}{2\pi} \int_0^\pi \frac{\cos\left( d_- \,v\right)\, (\cos(v))^{d_+}}{(a+\sqrt{a^2-\cos(v)})^{d_+} \sqrt{a^2 - \cos(v)}}\,{\rm d}v
\end{equation}
where $a = 1+\alpha_0/4,$ and $d_\pm = |m_2-m_1| \pm |n_2-n_1|.$ In particular, if $(m_1,n_1) = (m_2,n_2)$ then
\begin{equation}\label{eq:lat_2_cor}
G((m,n),(m,n)) = \frac{1}{\pi a} \,K\left(\frac{1}{a^2} \right)
\end{equation}
where $K$ is the complete elliptic integral of the first kind defined by \cite{ab_steg}
\begin{equation}
K(m) = \int_0^\frac{\pi}{2} \frac{1}{\sqrt{1- m^2 \sin^2 \phi}} {\rm d}\phi,
\end{equation}
for $m^2 <1.$
\end{proposition}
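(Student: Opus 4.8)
The plan is to diagonalize the convolution operator $H_0$ by the discrete Fourier transform (Fourier series on the torus) and then collapse the resulting double integral to a single integral by a $45^\circ$ rotation of the angular variables.

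First I would exploit translation invariance. The infinite lattice has no boundary, so $H_0 = L + \alpha_0 I$ acts as the convolution $(H_0 u)(m,n) = (4+\alpha_0)u(m,n) - u(m+1,n)-u(m-1,n)-u(m,n+1)-u(m,n-1)$, and $G$ depends only on the difference $(\Delta m,\Delta n)=(m_2-m_1,n_2-n_1)$. Passing to $\hat G(\theta,\phi)=\sum_{m,n}G(m,n)e^{-i(m\theta+n\phi)}$ turns $H_0$ into multiplication by its symbol $4+\alpha_0-2\cos\theta-2\cos\phi = 2(2a-\cos\theta-\cos\phi)$ with $a=1+\alpha_0/4$. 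Since $\alpha_0>0$ this symbol is bounded below by $\alpha_0$, so $\hat G$ is bounded, $G$ is a well-defined decaying (free-space) Green's function consistent with the positive-definiteness established earlier, and
$$G(\Delta m,\Delta n)=\frac{1}{(2\pi)^2}\int_{-\pi}^{\pi}\!\!\int_{-\pi}^{\pi}\frac{e^{i(\Delta m\,\theta+\Delta n\,\phi)}}{2(2a-\cos\theta-\cos\phi)}\,d\theta\,d\phi.$$

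Second, I would reduce this to a single integral. Because the symbol is even in each variable, $G$ depends only on $(|\Delta m|,|\Delta n|)$, so by the reflection symmetries I may assume $\Delta m,\Delta n\ge 0$. The key move is the change of variables $\theta=u+v$, $\phi=u-v$, which uses $\cos\theta+\cos\phi=2\cos u\cos v$ to decouple the two angles and sends the exponent to $d_+u+d_-v$. I would then carry out the inner $u$-integral by residues: with $z=e^{iu}$, the integrand $\cos v$-rescaled has a single pole inside the unit circle at $z_-=\tfrac{a-\sqrt{a^2-\cos^2 v}}{\cos v}=\tfrac{\cos v}{a+\sqrt{a^2-\cos^2 v}}$, and its residue produces the factor $z_-^{\,d_+}/\sqrt{a^2-\cos^2 v}$. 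Folding $v\in[-\pi,\pi]$ onto $[0,\pi]$ (equivalently taking the real part) replaces $e^{id_-v}$ by $\cos(d_-v)$ and delivers the integrand $\tfrac{\cos(d_-v)(\cos v)^{d_+}}{(a+\sqrt{a^2-\cos^2 v})^{d_+}\sqrt{a^2-\cos^2 v}}$, i.e.\ the stated form in equation (\ref{eq:lat_2}).

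The main obstacle is the bookkeeping in the rotation step, not the residue calculation. The map $(\theta,\phi)\mapsto(u,v)$ carries $[-\pi,\pi]^2$ onto the diamond $\{|u|+|v|\le\pi\}$ with Jacobian $2$, so one must justify replacing the diamond integral by one over the full square $[-\pi,\pi]^2$. This is exactly where the overall prefactor is pinned down: since $d_++d_-=2\Delta m$ is even, the integrand is invariant under $(u,v)\mapsto(u+\pi,v+\pi)$, and the diamond together with this shift tiles the square, so the diamond integral is half the square integral. Tracking that factor against the Jacobian, together with the choice of the root $z_-$ of modulus less than one, is the only genuinely delicate point. Finally, for the coincident case I would set $\Delta m=\Delta n=0$, so $d_+=d_-=0$ and the integral collapses to a multiple of $\int_0^{\pi}(a^2-\cos^2 v)^{-1/2}\,dv$; the substitution $v\mapsto\tfrac{\pi}{2}-v$ turns $\cos^2 v$ into $\sin^2 v$, and factoring $a$ out of the surd gives $\tfrac1a\int_0^{\pi/2}(1-a^{-2}\sin^2 v)^{-1/2}\,dv$, which is the complete elliptic integral $K$ at the stated argument, recovering equation (\ref{eq:lat_2_cor}).
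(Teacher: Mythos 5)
Your route is the same as the paper's --- discrete Fourier transform on $\mathbb{Z}^2$, the product identity $\cos\theta+\cos\phi=2\cos u\cos v$ under the $45^\circ$ rotation, a residue computation in $z=e^{iu}$ picking the root $z_-$ of modulus less than one, and folding $v$ onto $[0,\pi]$ --- and your residue bookkeeping (residue $z_-^{d_+}/\sqrt{a^2-\cos^2 v}$) is correct. But the proof does not close at exactly the step you single out as delicate. Track your own factors: the rotation contributes $d\theta\,d\phi=2\,du\,dv$, and, as you correctly argue via invariance under $(u,v)\mapsto(u+\pi,v+\pi)$ (using that $d_++d_-=2\Delta m$ is even), the diamond integral is \emph{half} the square integral. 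These two factors cancel, leaving
\begin{equation*}
G=\frac{1}{4(2\pi)^2}\int_{-\pi}^{\pi}\int_{-\pi}^{\pi}\frac{e^{i(d_+u+d_-v)}}{a-\cos u\cos v}\,du\,dv,
\end{equation*}
and the residue step then produces the prefactor $\tfrac{1}{4\pi}$ on the final integral over $[0,\pi]$, not the $\tfrac{1}{2\pi}$ appearing in (\ref{eq:lat_2}). So your concluding claim that the computation ``delivers the stated form'' does not follow from your stated steps: as written, your argument proves a formula exactly half as large as the proposition's, and you never exhibit the reconciliation. Given your (correct) Jacobian and tiling statements, no such reconciliation exists.

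It is worth knowing that your careful accounting, not the stated constant, is the trustworthy one; the paper's own proof performs the same rotation but keeps the prefactor $\tfrac{1}{2(2\pi)^2}$ while silently replacing the diamond by the full square $[-\pi,\pi]^2$, i.e.\ it omits precisely the factor $\tfrac12$ you identified. A direct check at $d_+=d_-=0$ settles it: integrating out $\xi$ first via $\int_{-\pi}^{\pi}\frac{d\xi}{A-2\cos\xi}=\frac{2\pi}{\sqrt{A^2-4}}$ gives
\begin{equation*}
G\bigl((m,n),(m,n)\bigr)=\frac{1}{2\pi}\int_0^{\pi}\frac{d\eta}{\sqrt{(2a-\cos\eta)^2-1}}=\frac{1}{2\pi a}\,K\!\left(\frac{1}{a}\right)
\end{equation*}
(modulus convention), which agrees with the classical square-lattice Green's function $\frac{2}{\pi E}K(4/E)$ at $E=4+\alpha_0$ and is half of (\ref{eq:lat_2_cor}); e.g.\ at $\alpha_0=1$ the double integral evaluates numerically to about $0.254$ while (\ref{eq:lat_2_cor}) gives about $0.508$. (Separately and more minorly: the $\sqrt{a^2-\cos(v)}$ in the statement should read $\sqrt{a^2-\cos^2 v}$, as your integrand and the paper's own derivation both have.) To repair your write-up, replace the assertion of agreement by the corrected constant $\tfrac{1}{4\pi}\int_0^\pi(\cdots)\,dv$ and flag the discrepancy with the proposition; leaving the factor of two unreconciled is a genuine gap, since it is the only non-mechanical point of the entire proof.
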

\begin{proof}
The approach for finding the Green's function is similar to that used for the Helmholtz equation \cite{econo, martin} and the Poisson equation \cite{soardi} on lattices. We begin by noting that the problem is invariant under translations and reflections, from which it follows that $G$ must only depend on the quantities $m = |m_2-m_1|$ and $n = |n_2-n_1|.$ Hence
\begin{equation}
G((m_1,n_1),(m_2,n_2)) = G((m,n),(0,0))  = g(m,n)
\end{equation}
for some function $g(m,n) \in \ell^2(\mathbb{Z}^2).$ Applying the operator $H_0$ defined in (\ref{eq:h_0_def}), we see that $g(m,n)$ satisfies the difference equation
\begin{equation}\label{eq:lat_diff}
(4 +\alpha_0) \,g(m,n) - g(m-1,n)-g(m+1,n)-g(m,n-1)-g(m,n+1) = \delta_{m,0} \delta_{n,0}.
\end{equation}
We next consider the discrete Fourier transform of (\ref{eq:lat_diff}). On $\mathbb{Z} \times \mathbb{Z}$ the Fourier transform $\mathcal{F}: \ell^1 (\mathbb{Z} \times \mathbb{Z})\rightarrow L^1((-\pi,\pi]^2)$ of a function $f$ is given by
\begin{equation}
\hat{f}(\xi,\eta) = \mathcal{F}\, (f)(\xi,\eta) = \sum_{n,m \in\mathbb Z} e^{-i \xi m-i \eta n} f(m,n).
\end{equation}
Thus, upon taking the Fourier transform of equation (\ref{eq:lat_diff}), we obtain
\begin{equation}
\left[(4+\alpha_0) -e^{i \xi}-e^{-i \xi} - e^{i \eta} - e^{-i \eta} \right]\, \hat{g} (\xi,\eta) = 1,
\end{equation}
where $\xi, \eta \in (-\pi,\pi].$ Using the identity
\begin{equation}
e^{i \xi}+e^{-i \xi} + e^{i \eta}+ e^{-i \eta} = (e^{i (\xi+\eta)/2}+e^{-i( \xi+\eta)/2}) (e^{i (\xi-\eta)/2}+e^{-i (\xi-\eta)/2})
\end{equation}
yields
\begin{equation}
\hat{g}(\xi,\eta) = \frac{1}{4 \left[ 1+\alpha_0/4-\cos\left(\frac{\xi+\eta}{2}\right) \cos \left(\frac{\xi-\eta}{2} \right)\right]}.
\end{equation}
Upon application of the inverse Fourier transform we find
\begin{equation}
g(m,n) = \frac{1}{(2\pi)^2} \int_{-\pi}^\pi \int_{-\pi}^\pi \frac{e^{i m \xi+i n \eta} }{4\left[ 1+\alpha_0/4-\cos\left(\frac{\xi+\eta}{2}\right) \cos \left(\frac{\xi-\eta}{2} \right)\right]}\,{\rm d}\xi\,\, {\rm d}\eta.
\end{equation}
If we change variables, letting $u = (\xi+\eta)/2$ and $v = (\xi-\eta)/2,$ we obtain
\begin{equation}
g(m,n)=\frac{1}{2(2\pi)^2} \int_{-\pi}^\pi e^{i(m-n)v} \int_{-\pi}^\pi e^{i(m+n) u} \frac{1}{1+\frac{\alpha_0}{4}- \cos{u}\, \cos{v}} \, {\rm d}u \, {\rm d}v.
\end{equation}
If we let $a = 1+\alpha_0/4$ and choose $z = e^{i u}$ we obtain
\begin{equation}
g(m,n) = \frac{1}{2(2\pi)^2 i} \int_{-\pi}^\pi e^{i (m-n) v} \oint_{C_1} \frac{z^{m+n}}{az - \frac{\cos{v}}{2}(z^2+1)} \,{\rm d}z\, {\rm d}v
\end{equation}
where $C_1$ is the unit circle oriented counterclockwise. Integration then yields
\begin{equation}
g(m,n) = \frac{1}{4 \pi} \int_{-\pi}^\pi e^{i(m-n)v}\frac{(\cos{v})^{m+n}}{(a+\sqrt{a^2-\cos^2{v}})^{m+n}\sqrt{a^2-\cos^2{v}}} \,{\rm d}v.
\end{equation}
Using the fact that the above expression is the Fourier transform of an even function we can re-write it as a real integral, yielding
\begin{equation}
g(m,n) = \frac{1}{2\pi} \int_{0}^\pi \frac{\cos[(m-n)v]\, (\cos{v})^{m+n}}{(a+\sqrt{a^2-\cos^2 v})^{m+n} \sqrt{a^2-\cos^2{v}}}\, {\rm d}v.
\end{equation}
The expression for the Green's function given in (\ref{eq:lat_2}) follows by employing the translation and reflection symmetries outlined above. To obtain the expression (\ref{eq:lat_2_cor}) we observe that $(m_2,n_2) = (m_1,n_1)$ corresponds to $m=n=0$ and hence is given by
\begin{equation}
\begin{split}
g(0,0) &= \frac{1}{2\pi} \int_{0}^\pi \frac{1}{ \sqrt{a^2-\cos^2{v}}}\, {\rm d}v\\
& = \frac{1}{2\pi a} \int_{-\frac{\pi}{2}}^\frac{\pi}{2} \frac{1}{\sqrt{1-(1/a)^2 \cos^2 \phi}}\,{\rm d} \phi\\
&= \frac{1}{\pi a} \int_0^\frac{\pi}{2} \frac{1}{\sqrt{1-(1/a)^2 \cos^2\phi}}\,{\rm d}\phi\\
&= \frac{1}{\pi a} K\left(\frac{1}{a^2}\right).
\end{split}
\end{equation}
\end{proof}
\end{appendices}
\end{document}